\newtheorem {lemma}{Lemma}[section]
\newtheorem {theorem} {Theorem}[section]
\newtheorem {claim}{Claim}[section]
\newtheorem {problem}{Problem}[section]
\begin{document}

\title{Eigenvalue conditions implying edge-disjoint spanning trees and a forest with constraints}

\author{Jin Cai\footnote{Email: jincai@m.scnu.edu.cn}, Bo Zhou\footnote{Email: zhoubo@m.scnu.edu.cn}\\
School of Mathematical Sciences, South China Normal University\\
Guangzhou 510631, P.R. China}

\date{}
\maketitle

\begin{abstract}
Let $G$ be a nontrivial graph with minimum degree $\delta$ and $k$ an integer with $k\ge 1$.
In the literature, there are eigenvalue conditions  that imply $G$  contains $k$ edge-disjoint spanning trees.
We give eigenvalue conditions  that imply $G$  contains $k$ edge-disjoint spanning trees and another forest $F$ with $|E(F)|>\frac{\delta-1}{\delta}(|V(G)|-1)$, and if $F$ is not a spanning tree, then $F$ has a component with at least $\delta$ edges.
 \\ \\
%{\it MSC:} 05C50,  05C38 \\ \\
{\it Keywords:} edge-disjoint spanning trees, eigenvalues, fractional packing number, minimum degree
\end{abstract}

\section{Introduction}
In this paper, we consider finite, undirected and simple graphs.
As usual, $K_n$ denotes a complete graph of order $n$.
For a connected graph $G$, let $\tau (G)$  be  the maximum number of edge-disjoint spanning trees in  $G$, which is also known as the spanning-tree packing number, see \cite{GuLiu,LaiLi,Le,Pa}. By definition, $\tau(K_1)=\infty$, and $\tau(G)=0$ if $G$ is disconnected.

The eigenvalues of a graph are the eigenvalues of
its  adjacency matrix.  Since the adjacency matrix of a graph is a real symmetric matrix,  every eigenvalue of a graph is real.
For a graph $G$ of order $n$, let $\lambda_i(G)$ be the $i$-th largest eigenvalue of $G$ with $i=1,\dots, n$.  $\lambda_1(G)$ is also known as the spectral radius of $G$.

Seymour proposed the following problem (in private communication to Cioab\v a) relating  $\tau(G)$ and eigenvalues of $G$.

\begin{problem}\label{q1} \cite{Cio}
Let $G$ be a nontrivial graph. Determine the relationship between $\tau(G)$ and eigenvalues of $G$.
\end{problem}

Motivated by Problem \ref{q1}, Cioab\v{a} and  Wong \cite{Cio} established the first a few results on Problem \ref{q1}. They
proposed a conjecture: Let $k$ be an integer with $k\ge 2$ and $G$ be a $d$-regular graph with $d\ge 2k$. If $\lambda_2(G)\le d-\frac{2k-1}{d+1}$, then $\tau (G)\ge k$. This was then generalized  by
Liu, Hong, Gu and Lai \cite{LHGL} claiming that it holds for a graph $G$ with minimum degree $\delta\ge 2k$, which was confirmed by Gu et al. \cite{Gu}. Further results may be found in  \cite{COP} and references therein.
%
%
%%\begin{conjecture}\label{c1} \cite{Gu}
%%Let $k$ be an integer with $k\ge 2$ and $G$ be a graph with minimum degree $\delta\ge 2k$. If $\lambda_2(G)\le \delta-\frac{2k-1}{\delta+1}$, then $\tau (G)\ge k$.
%%\end{conjecture}
%
%Cioab\v{a} and  Wong \cite{Cio}, Gu et al. \cite{Gu} and some other mathematicians established a series of results on Problem \ref{q1} and Conjecture \ref{c1}.
We state two  typical works.

For positive integers $n$ and $s$ and a nonnegative integer $k$ with $n\ge s+k$, let $G\cong B_{n,s}^{k}$ be a graph obtained from disjoint $K_{s}$ and $K_{n-s}$ by adding $k$ edges joining a vertex in $K_{s}$ and $k$ vertices in $K_{n-s}$. In particular, $B_{n,s}^{0}=K_s\cup K_{n-s}$.

\begin{theorem}\label{spectral}\cite{Fan1}
Let $k$ be an integer with $k\ge 2$, and let $G$ be a connected graph with minimum degree $\delta\ge 2k$ and order $n\ge 2\delta+3$. If $\lambda_1(G)\ge \lambda_1(B_{n,\delta+1}^{k-1})$,  then $\tau(G)\ge k$
unless  $G\cong B_{n,\delta+1}^{k-1}$.
\end{theorem}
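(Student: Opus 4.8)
\medskip
\noindent The plan is to prove the equivalent sharp statement: if $G$ is connected with $\delta(G)\ge 2k$, $n=|V(G)|\ge 2\delta+3$, and $\tau(G)\le k-1$, then $\lambda_1(G)\le\lambda_1(B_{n,\delta+1}^{k-1})$, with equality only if $G\cong B_{n,\delta+1}^{k-1}$; the theorem follows by contraposition. Adding an edge to a connected graph strictly increases its spectral radius and preserves both $\delta\ge 2k$ and the order $n$, so every such $G$ is a spanning subgraph of a graph $G^{*}$ that is edge-maximal subject to being connected with $\delta\ge 2k$ and $\tau\le k-1$, and $\lambda_1(G)\le\lambda_1(G^{*})$ with equality only when $G=G^{*}$; hence it suffices to treat edge-maximal $G$. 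By the Nash--Williams--Tutte theorem there is a partition $\mathcal P=\{V_1,\dots,V_t\}$ of $V(G)$ with $t\ge 2$ and $e_G(\mathcal P)\le k(t-1)-1$, where $e_G(\mathcal P)$ is the number of edges joining distinct parts. Adding an edge inside a part does not change $e_G(\mathcal P)$, so edge-maximality forces each $G[V_i]$ to be a clique; and $G\ne K_n$ since $\tau(K_n)=\lfloor n/2\rfloor\ge k$.

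The structural reduction comes next. Summing $\deg_G(v)\ge\delta$ over $v\in V_i$ yields $n_i(n_i-1)+e_G(V_i,V(G)\setminus V_i)\ge\delta n_i$, so any part $V_i$ with $n_i\le\delta$ is incident to at least $\delta$ crossing edges; since $\delta\ge 2k$, this is too expensive against the budget $e_G(\mathcal P)\le k(t-1)-1$ to allow many such parts, and combining this counting with edge-maximality (merging parts, and adding a crossing edge between parts not yet completely joined) should force $t=2$. With $t=2$ we get $e_G(V_1,V_2)=e_G(\mathcal P)\le k-1$, and the degree inequality becomes $n_1^{2}-(\delta+1)n_1+(k-1)\ge 0$; its smaller root is below $1$ (as $k-1<\delta$) and its larger root exceeds $\delta+\frac{1}{2}$ (as $2\delta>4k-5$), so $n_1\ge\delta+1$. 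Writing $\delta+1\le n_1\le n_2$ with $n_1+n_2=n$, the graph $G$ is $K_{n_1}\cup K_{n_2}$ plus $e_G(V_1,V_2)$ edges between the cliques; adding further crossing edges until there are exactly $k-1$ only increases $\lambda_1$, so it is enough to maximize $\lambda_1$ over the family of graphs obtained from $K_{n_1}\cup K_{n_2}$ by adding $k-1$ edges between the parts, with $\delta+1\le n_1\le n_2$.

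The last piece is the spectral comparison. First, a Kelmans-type switching on a Perron eigenvector $x$ of $G$: if $u,v$ are in the same clique, $uw$ is a crossing edge, $v$ is nonadjacent to $w$, and $x_v\ge x_u$, then replacing $uw$ by $vw$ does not decrease $x^{\top}A(G)x$, hence does not decrease $\lambda_1$; since there are only $k-1<n_2$ crossing edges there is always a free receiving vertex, and iterating pushes all crossing edges onto a single vertex of one clique, so $\lambda_1(G)\le\lambda_1(B_{n,a}^{k-1})$ for some $a$ with $\delta+1\le a\le n-\delta-1$. Finally one computes the characteristic polynomial of the quotient matrix of the natural equitable partition of $B_{n,a}^{k-1}$ -- a polynomial of small fixed degree in $\lambda$ with coefficients depending on $a,n,k$ -- and shows its largest root is strictly decreasing in $a$ over $\{\delta+1,\dots,n-\delta-1\}$, so the maximum is attained exactly at $a=\delta+1$, i.e.\ at $B_{n,\delta+1}^{k-1}$. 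Combining the inequalities, and using that a proper spanning subgraph of a connected graph has strictly smaller spectral radius to propagate the equality case back through $G\subseteq G^{*}$ and through the switchings, gives $\lambda_1(G)\le\lambda_1(B_{n,\delta+1}^{k-1})$ with equality only for $G\cong B_{n,\delta+1}^{k-1}$.

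I expect two points to carry the difficulty. The first is eliminating $t\ge 3$: the degree-sum estimate restricts how small the parts can be, but ruling out configurations such as one large clique joined to a few smaller cliques by a handful of edges needs the counting to be combined carefully with edge-maximality, and this is where the hypothesis $n\ge 2\delta+3$ is used. The second is the monotonicity of $\lambda_1(B_{n,a}^{k-1})$ in $a$, i.e.\ showing that moving one vertex from the larger clique into the smaller one (with the $k-1$ attaching edges kept concentrated at a single vertex) strictly increases the spectral radius; this reduces to a sign analysis of the small characteristic polynomial at $\lambda_1(B_{n,a}^{k-1})$, equivalently a delicate comparison of Perron-vector entries, and it also subsumes checking that putting the special vertex in the smaller clique is the better of the two placements.
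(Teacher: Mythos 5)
This theorem is not proved in the paper at all --- it is quoted from Fan, Gu and Lin \cite{Fan1}, and the two hardest ingredients of its proof are exactly the statements the paper imports as Lemmas \ref{B} and \ref{change}. Measured against that, your outline has the right overall shape (contrapositive via the Tree Packing Theorem, reduction to cliques on the parts, comparison with $B_{n,\delta+1}^{k-1}$), but both of the steps you yourself flag as ``the difficulty'' are genuine gaps, and for the first one the fix you propose goes in a wrong direction. Counting plus edge-maximality does \emph{not} force the witness partition down to $t=2$. For $t=3$ the budget is $e(\mathcal P)\le 2k-1<\delta$, so every part has fewer than $\delta$ crossing edges and hence (by your own degree estimate, i.e.\ Lemma \ref{U}) order at least $\delta+1$: you get three or more large cliques joined by few edges, not two. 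Such graphs can be edge-maximal subject to $\tau\le k-1$; e.g.\ for $k=2$, three cliques of order at least $\delta+1$ pairwise joined by single edges have $\tau=1$, and no merge of parts yields a $2$-part witness since each pairwise bundle has only $k-1=1$ edge, while adding any crossing edge destroys all witnesses. So the $t\ge 3$ case cannot be removed structurally; it must be handled by showing its spectral radius is too small. That is precisely what the present paper does in the analogous situation (Case 2 of the proof of Theorem \ref{s2}): one bounds $e(G)$ using the fact that at least two parts have order $\ge\delta+1$ together with Lemmas \ref{t1} and \ref{choose}, and then applies the Hong--Nikiforov bound (Lemma \ref{upper}) to get $\lambda_1(G)<n-\delta-2\le\lambda_1(B_{n,\delta+1}^{k-1})$, using $n\ge 2\delta+3$ and $\delta\ge 2k$. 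Your sketch contains no substitute for this step.

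The second gap is the spectral comparison in the two-part case. What you need there is exactly the content of Lemmas \ref{B} and \ref{change}: that among all graphs obtained from $K_{\delta+1}\cup K_{n-\delta-1}$ by adding $k-1$ crossing edges the maximizer is $B_{n,\delta+1}^{k-1}$, and that if the smaller clique has order $b\ge\delta+2$ the spectral radius drops strictly below $\lambda_1(B_{n,\delta+1}^{k-1})$. Your Kelmans-type switching correctly shows each single switch does not decrease $\lambda_1$ (the change in $x^{\top}Ax$ is $2x_w(x_v-x_u)\ge 0$), but the claim that iterating concentrates all crossing edges at one vertex \emph{of the smaller clique}, the strictness needed for the uniqueness statement, and above all the monotonicity of $\lambda_1(B_{n,a}^{k-1})$ in $a$ (which uses $\delta\ge 2k$ and $n\ge 2\delta+3$ in an essential way) are asserted, not proved; also note the natural equitable partition of $B_{n,a}^{k-1}$ has four cells (hub, rest of $K_a$, attachment vertices, rest of $K_{n-a}$), so the ``small characteristic polynomial'' analysis is a real computation, not a two-by-two quotient. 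Until the $t\ge3$ case is handled by an actual spectral estimate and the $\mathcal G_{n,a}^{k-1}$ extremal/monotonicity lemmas are proved, the argument is an outline of \cite{Fan1}'s theorem rather than a proof of it.
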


Generalizing sufficient eigenvalue conditions for a $d$-regular graph $G$ with $\tau (G)\ge k$
in \cite{Cio}, Lui et al. \cite{LHGL} established the following theorem on graphs with fixed minimum degree.

\begin{theorem}\label{min}\cite{LHGL}
Let $k$ be an integer with $k\ge 2$, and let $G$ be a graph with minimum degree $\delta\ge 2k$. If $\lambda_2(G)<\delta-\frac{2k-1}{\delta+1}$,  then $\tau(G)\ge k$.
\end{theorem}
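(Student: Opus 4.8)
The plan is to prove the contrapositive: assuming $\tau(G)\le k-1$, we show $\lambda_2(G)\ge c$, where $c:=\delta-\frac{2k-1}{\delta+1}$; note $0<c<\delta$ since $1\le 2k-1\le\delta-1$. If $G$ is disconnected, then at least two of its components have minimum degree $\ge\delta$ and hence spectral radius $\ge\delta$, so $\lambda_2(G)\ge\delta>c$ and we are done; thus we may assume $G$ is connected. By the Nash--Williams--Tutte theorem, $\tau(G)<k$ yields a partition $\mathcal P=\{V_1,\dots,V_t\}$ of $V(G)$ with $t\ge 2$ and $e_G(\mathcal P)\le k(t-1)-1$, where $e_G(\mathcal P)$ denotes the number of edges joining distinct parts. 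Among all such partitions we fix an extremal one — with each $G[V_i]$ connected and, subject to that, with $t$ minimum — so that $e_G(V_i,V_j)\le k-1$ whenever $V_i$ and $V_j$ are joined by an edge, since otherwise merging them would produce a smaller bad partition.

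Next we record two structural facts. First, since $\sum_i e_G(V_i,V(G)\setminus V_i)=2e_G(\mathcal P)\le 2k(t-1)-2$, averaging shows that at least one part — indeed, when $k\ge2$, at least two parts — satisfies $e_G(V_i,V(G)\setminus V_i)\le 2k-1$; call such a part \emph{light}. Second, if $S\subseteq V(G)$ has $e_G(S,V(G)\setminus S)\le 2k-1\,(\le\delta-1)$, then $2e(G[S])=\sum_{v\in S}d_G(v)-e_G(S,V(G)\setminus S)\ge|S|\delta-(2k-1)$ together with $2e(G[S])\le|S|(|S|-1)$ forces $|S|\ge\delta+1$; applying this to a light part and to its complement shows that both sides of the edge cut around a light part have at least $\delta+1$ vertices. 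We also keep a parity refinement in reserve: $2e(G[S])$ is even, so if $|S|\delta-(2k-1)$ is odd the bound improves to $2e(G[S])\ge|S|\delta-(2k-2)$, a one-unit gain that turns out to be exactly what is needed at the threshold.

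Now we bring in the eigenvalues. Let $B$ be the quotient matrix of $A(G)$ with respect to $\mathcal P$ (or to a coarsening that still isolates a light part); $B$ has only real eigenvalues and, by the interlacing theorem for quotient matrices, $\lambda_2(G)\ge\lambda_2(B)$, so it suffices to prove $\lambda_2(B)\ge c$. Equivalently, for the symmetrization $\widehat B=D^{1/2}BD^{-1/2}$ with $D=\operatorname{diag}(|V_1|,\dots,|V_t|)$, we must show $\widehat B-cI$ has at most one negative eigenvalue. The diagonal entries are $B_{ii}=2e(G[V_i])/|V_i|\ge\delta-e_G(V_i,V(G)\setminus V_i)/|V_i|$, which is $\ge c$ for every light part, while the off-diagonal entries are governed by the small numbers $e_G(V_i,V_j)$. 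In the clean instances — $t=2$, or a light part with strictly more than $\delta+1$ vertices, or a light part carrying at most $2k-2$ crossing edges — this comes down to a short computation: a $2\times2$ compression of $\widehat B$ has smaller eigenvalue $\ge c$ precisely when (given $B_{ii},B_{jj}\ge c$) $(B_{ii}-c)(B_{jj}-c)\ge e_G(V_i,V_j)^2/(|V_i|\,|V_j|)$, and the size and parity bounds above make this go through.

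The crux, which I expect to be the main obstacle, is the threshold regime: a light part $V_i$ with exactly $\delta+1$ vertices inducing a near-complete graph and carrying exactly $2k-1$ crossing edges, while the remaining parts each carry close to $2k$ crossing edges. There a coarse $2$- or $3$-part quotient of $A(G)$ can have second eigenvalue strictly below $c$, so one must work with the full $t\times t$ matrix $B$ under the global constraint $e_G(\mathcal P)\le k(t-1)-1$ together with the constraints $e_G(V_i,V_j)\le k-1$ coming from extremality of $\mathcal P$. The plan there is to carry out the optimization explicitly over admissible partition profiles $(\,|V_i|,\;e_G(V_i,V_j)\,)$, showing that $\lambda_2(B)$ is minimized by the configuration of two near-$K_{\delta+1}$'s joined by $2k-1$ edges — precisely the configuration that makes $c=\delta-\frac{2k-1}{\delta+1}$ tight — and that even there $\lambda_2(B)\ge c$, the parity gain being what keeps it from dipping below.
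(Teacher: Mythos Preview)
This theorem is quoted from \cite{LHGL} and is not proved in the present paper; the closest analogue that \emph{is} proved here is Theorem~\ref{s1}, and its proof illustrates the intended method.

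Your write-up has a genuine gap exactly where you flag the crux. You aim to show $\lambda_2(B)\ge c$ for the full $t\times t$ quotient $B$, but only the \emph{light} parts are guaranteed $B_{ii}\ge c$; a non-light part can be a singleton with $B_{ii}=0$, so $\widehat B-cI$ may have many negative diagonal entries, and the constraints $e_G(V_i,V_j)\le k-1$ and $e_G(\mathcal P)\le k(t-1)-1$ do not obviously force all but one eigenvalue above $c$. The ``explicit optimization over admissible partition profiles'' is announced but never carried out, and there is no evidence it would be short or even succeed.

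The argument in \cite{LHGL}, reproduced in this paper for Theorem~\ref{s1}, avoids the $t\times t$ matrix entirely and needs neither your extremal partition nor the parity trick. Call $V_j$ \emph{light} if $r_j:=e_G(V_j,V\setminus V_j)\le 2k-1$; then $|V_j|\ge\delta+1$ by Lemma~\ref{U}. For any two light parts $V_1,V_i$, apply Cauchy interlacing (Lemma~\ref{inter}) to the principal submatrix $A(G[V_1\cup V_i])$ and then quotient interlacing (Lemma~\ref{quo}) to its $2\times 2$ quotient $B'$; both diagonals $a_j=2e(G[V_j])/|V_j|\ge\delta-r_j/|V_j|\ge c>\lambda_2(G)\ge\lambda_2(B')$, and the explicit formula for $\lambda_2(B')$ yields
\[
e(V_1,V_i)^2\;>\;(2k-1-r_1)(2k-1-r_i)\;\ge\;(2k-1-r_i)^2,
\]
hence $e(V_1,V_i)+r_i\ge 2k$ by integrality. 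Summing over light $i\ne 1$ and using $r_1\ge\sum_i e(V_1,V_i)$ gives $\sum_{\text{light}}r_j\ge 2k(h-1)$; since every non-light part has $r_j\ge 2k$, one obtains $\sum_j r_j\ge 2k(p-1)$, which is exactly the Nash--Williams--Tutte criterion for $\tau(G)\ge k$. This is a complete proof with no residual ``threshold regime'' to analyse.
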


To obtain these results, the Tree Packing Theorem due to Tutte \cite{Tutte} and Nash-Williams \cite{NW}
over 60 years ago is needed.
For a graph $G$ we denote by $V(G)$ the vertex set and $E(G)$ the edge set.
For vertex disjoint subset $X, Y\subset V(G)$, $E(X,Y)$ denotes the set of edges of $G$  with one end vertex in $X$ and the other end vertex in $Y$, and let $e(X,Y)=|E(X,Y)|$.
For any partition $\mathcal{P}$ of $V(G)$ of a nontrivial graph $G$, the size $|\mathcal{P}|$ of partition $\mathcal{P}$ is the number parts. The fractional packing number $\nu_f(G)$ is defined by
\[
\nu_f(G)=\min_{|\mathcal{P}|\ge 2} \frac{\sum_{1\le i< j \le |\mathcal{P}|}e(V_i,V_j)}{|\mathcal{P}|-1}.
\]

\begin{theorem}\label{NW}\cite{NW,Tutte} [Tree Packing Theorem] For a nontrivial graph $G$ and a  nonnegative integer $k$,
$\tau(G)\ge k$ if and only if $\nu_f(G)\ge k$.
\end{theorem}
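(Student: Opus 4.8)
Theorem~\ref{NW} is a reformulation, in the language of the fractional packing number, of the classical spanning-tree packing theorem of Tutte~\cite{Tutte} and Nash-Williams~\cite{NW}, so my plan is essentially to quote that result and record that the two formulations coincide. Unravelling the definition, the inequality $\nu_f(G)\ge k$ says precisely that $\sum_{1\le i<j\le|\mathcal P|}e(V_i,V_j)\ge k(|\mathcal P|-1)$ for every partition $\mathcal P=\{V_1,\dots,V_{|\mathcal P|}\}$ of $V(G)$ with $|\mathcal P|\ge 2$; and the partition into a single part contributes only the vacuous inequality $0\ge 0$. Hence ``$\nu_f(G)\ge k$'' is verbatim the partition condition appearing in the Tutte--Nash-Williams theorem, and the degenerate instances ($G$ disconnected, $k=0$) check out immediately against the conventions $\tau(G)=0$ and $\nu_f(G)\ge 0$.

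Should a self-contained argument be wanted, I would split the equivalence into its two implications. Necessity is the easy half: if $T_1,\dots,T_k$ are edge-disjoint spanning trees and $\mathcal P=\{V_1,\dots,V_p\}$ is any partition with $p\ge 2$, then contracting each $V_i$ to a single vertex turns each $T_\ell$ into a connected spanning subgraph of the resulting $p$-vertex multigraph, so $T_\ell$ uses at least $p-1$ edges joining distinct parts; since the $T_\ell$ are pairwise edge-disjoint, these crossing sets are disjoint, whence $\sum_{i<j}e(V_i,V_j)\ge k(p-1)$ and $\nu_f(G)\ge k$.

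For sufficiency I would invoke the matroid union theorem of Nash-Williams and Edmonds applied to $k$ copies of the cycle matroid $M(G)$ of a connected $G$. A spanning tree is precisely a basis of $M(G)$, and since $r\bigl(M(G)\bigr)=|V(G)|-1$, any set of size $k(|V(G)|-1)$ that is independent in the $k$-fold union $M(G)\vee\cdots\vee M(G)$ is necessarily a union of $k$ pairwise disjoint bases; thus $G$ has $k$ edge-disjoint spanning trees iff this union has full rank $k(|V(G)|-1)$. By the union formula that rank equals $\min_{Y\subseteq E(G)}\bigl(|E(G)\setminus Y|+k\,r_{M(G)}(Y)\bigr)$, and writing $r_{M(G)}(Y)=|V(G)|-c(Y)$ with $c(Y)$ the number of components of the spanning subgraph $\bigl(V(G),Y\bigr)$, the full-rank condition becomes $|E(G)\setminus Y|\ge k\bigl(c(Y)-1\bigr)$ for every $Y$. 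Given such a $Y$, let $\mathcal P$ be the partition of $V(G)$ into the vertex sets of the components of $(V(G),Y)$; every edge joining two parts lies outside $Y$, so $|E(G)\setminus Y|\ge\sum_{i<j}e(V_i,V_j)$, and $\nu_f(G)\ge k$ (together with the trivial case $c(Y)=1$) delivers the inequality needed.

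The only genuine obstacle is this sufficiency direction, which is the substance of the Tutte--Nash-Williams theorem; I would not reprove it from scratch, and granting the matroid union theorem the remaining work is routine bookkeeping about components and partitions. Since in this paper the result is used only as a black box in the arguments that follow, citing \cite{NW,Tutte} is all that is strictly required here.
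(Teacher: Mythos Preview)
Your proposal is correct and matches the paper's treatment: Theorem~\ref{NW} is stated with citations to \cite{NW,Tutte} and is not proved in the paper, being used purely as a black box thereafter. Your additional sketch of both implications (the easy necessity count and the sufficiency via matroid union) is sound and goes beyond what the paper supplies.
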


Fan et al. \cite{Fan} established an extension of Theorem \ref{NW} stating that
for a graph $G$ of order $n$, if $\nu_f(G)=k+\varepsilon$ with $0\le \varepsilon <1$, then $\tau(G)\ge k$, and  apart from $k$ edge-disjoint spanning trees, there is  another forest with at least $\varepsilon(n-1)$ edges in $G$. Recently,
Fang and Yang \cite{Fang} gave  a structural explanation for the fractional part $\varepsilon$.

For a nonnegative integer $k$ and a positive integer $d$, a graph $G$ is said to have property $P(k, d)$ if $G$ satisfies the following three conditions:
\begin{enumerate}

\item[(a)]
$\tau(G)\ge k$,

\item[(b)] apart from $k$ edge-disjoint spanning trees, there is another forest $F$ with $|E(F)|>\frac{d-1}{d}(|V(G)|-1)$,

\item[(c)]
if $F$ is not a spanning tree, then $F$ has a component with at least $d$ edges.
\end{enumerate}

Note that any graph $G$ with minimum degree $\delta\ge 1$  has property $P(0, \delta)$. This is obvious
if $G$ is connected. Suppose that $G$ is a disconnected graph with order $n$ and minimum degree $\delta\ge 1$. Then every component has at least $\delta+1$ vertices, so it has at most $\frac{n}{\delta+1}$ components. It follows that
$G$ has a spanning forest $F$ such that $|E(F)|\ge n-\frac{n}{\delta+1}>\frac{\delta-1}{\delta}(n-1)$.  Evidently, each component of $F$  has at least $\delta$ edges. Thus,  $G$ has property $P(0, \delta)$.

\begin{theorem}\label{Fang}\cite{Fang}
For positive integers $k$ and $d$, and a nontrivial graph $G$, if $\nu_f(G)>k+\frac{d-1}{d}$, then
$G$ has property $P(k, d)$.
\end{theorem}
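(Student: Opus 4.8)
The strategy is to read off (a) and (b) from the Tree Packing Theorem and the forest-extension result of Fan et al.\ \cite{Fan}, reduce the statement to (c), and then prove (c) by choosing a packing that minimizes the number of components of the extra forest and forcing a contradiction through an exchange argument on the spanning trees.

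Since $k\ge 1$ we have $\nu_f(G)>k+\tfrac{d-1}{d}\ge k>0$, so $G$ is connected; write $n=|V(G)|$. By the Tree Packing Theorem (Theorem~\ref{NW}) we get $\tau(G)\ge k$, which is (a). If moreover $\nu_f(G)\ge k+1$, then $\tau(G)\ge k+1$, and taking $k+1$ edge-disjoint spanning trees with $F$ the last of them gives a spanning tree $F$ with $|E(F)|=n-1>\tfrac{d-1}{d}(n-1)$; so (b) holds and (c) is vacuous, and we are done. Hence we may assume $k+\tfrac{d-1}{d}<\nu_f(G)<k+1$ and set $\varepsilon=\nu_f(G)-k\in(\tfrac{d-1}{d},1)$. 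By Fan et al.\ \cite{Fan}, $G$ contains $k$ edge-disjoint spanning trees $T_1,\dots,T_k$ and an edge-disjoint forest with at least $\varepsilon(n-1)$ edges; enlarging it to a maximum forest $F$ of $R=G-\bigcup_{i=1}^k E(T_i)$ keeps $|E(F)|\ge\varepsilon(n-1)>\tfrac{d-1}{d}(n-1)$, so (b) holds for this packing, and it remains to arrange (c).

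Assume for contradiction that (c) fails for every packing $(T_1,\dots,T_k,F)$ in which $F$ is a maximum forest of $R=G-\bigcup_i E(T_i)$ with $|E(F)|>\tfrac{d-1}{d}(n-1)$; by the previous paragraph at least one such packing exists. Among them fix one minimizing the number $c$ of components of $F$. Then $F$ is not a spanning tree and every component of $F$ has at most $d-1$ edges, so $c\ge 2$. As $F$ is maximum in $R$, its components coincide with those of $R$; let $\mathcal P=\{V_1,\dots,V_c\}$ be the corresponding partition of $V(G)$. Since the parts are the components of $R$, every edge of $G$ joining two distinct parts lies in $\bigcup_i E(T_i)$, whence by edge-disjointness $e_G(\mathcal P)=\sum_{i=1}^k e_{T_i}(\mathcal P)$, with $e_{T_i}(\mathcal P)\ge c-1$ for each $i$ because $T_i$ is spanning. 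On the other hand $\mathcal P$ has $c\ge 2$ parts, so $e_G(\mathcal P)\ge\nu_f(G)(c-1)=(k+\varepsilon)(c-1)>k(c-1)$. Therefore $\sum_{i=1}^k\bigl(e_{T_i}(\mathcal P)-(c-1)\bigr)=e_G(\mathcal P)-k(c-1)>0$, so some tree, say $T_1$, crosses $\mathcal P$ at least $c$ times; equivalently, contracting each part of $\mathcal P$ turns $T_1$ into a connected spanning subgraph of the resulting multigraph that contains a cycle.

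The next step, which I expect to be the main obstacle, is a surgery on $T_1$ intended to decrease $c$. Choose $e\in E(T_1)$ lying on such a cycle; then $e$ joins two parts, say $V_1$ and $V_2$, and deleting $e$ splits the tree $T_1$ into two subtrees with vertex sets $X$ and $Y$, so that $X\sqcup Y=V(G)$ and $e$ is the unique edge of $T_1$ between $X$ and $Y$. The goal is to find an edge $f$ of $G$ between $X$ and $Y$ with $f\notin\bigcup_i E(T_i)\cup E(F)$. Given such an $f$, the maximality of $F$ forces both endpoints of $f$ into a single part (otherwise $F+f$ would be a larger forest), so $T_1':=T_1-e+f$ is a spanning tree that is edge-disjoint from $T_2,\dots,T_k$ and from the forest $F+e$; since $F+e$ has $c-1$ components and more than $\tfrac{d-1}{d}(n-1)$ edges, enlarging it to a maximum forest of $G-\bigl(E(T_1')\cup\bigcup_{i\ge 2}E(T_i)\bigr)$ produces a valid packing with at most $c-1$ components, contradicting the minimality of $c$. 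The crux is the existence of such an $f$: one has $e_G(X,Y)\ge\nu_f(G)>k$ while $T_1$ contributes exactly one edge to the cut $(X,Y)$, so $f$ exists unless $T_2,\dots,T_k$ and $F$ between them use up all of $E(X,Y)$. Ruling that out is expected to need, first, rerouting $T_2,\dots,T_k$ (by exchanges analogous to the one above) so that each crosses $(X,Y)$ exactly once, and then, in the residual case where $F$ itself carries an edge across $(X,Y)$, arguing directly that $F$ must contain a component on at least $d+1$ vertices, i.e.\ with at least $d$ edges---which is exactly the alternative allowed by (c). Configurations with small $c$ or small $n$ would be settled by direct inspection.
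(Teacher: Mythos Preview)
This theorem is not proved in the paper: it is quoted from Fang and Yang~\cite{Fang} and invoked as a black box in the proofs of Theorems~\ref{s2} and~\ref{s1}. There is therefore no proof in the paper to compare your attempt against.

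On the proposal itself: your reduction to (c) is clean, and the plan of minimising the number $c$ of components of $F$ over admissible packings, then exhibiting an edge--exchange that lowers $c$, is a natural line. But the gap you yourself flag is genuine and is not closed by the sketch that follows. From $e_G(X,Y)\ge\nu_f(G)>k$ you obtain only $e_G(X,Y)\ge k+1$, whereas $T_2,\dots,T_k$ may together contribute far more than $k-1$ edges to the cut $(X,Y)$, and $F$ may contribute as well; nothing in the setup bounds $\sum_{i\ge 2}e_{T_i}(X,Y)+e_F(X,Y)$ below $e_G(X,Y)-1$, so the desired spare edge $f$ need not exist. Your suggested repair of first rerouting each $T_i$ ($i\ge 2$) to cross $(X,Y)$ exactly once runs into the same obstruction recursively, since every such rerouting requires its own replacement edge outside the remaining trees and $F$. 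And the final claim, that an $F$-edge across $(X,Y)$ forces a component of $F$ with at least $d$ edges, is unsupported: such an edge only witnesses that some part $V_j$ meets both $X$ and $Y$, which says nothing about $|V_j|$. As written the argument does not close; completing it would require substantially new ideas, essentially reproving the main content of~\cite{Fang}.
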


Motivated by the above works, we investigate the following problem.

\begin{problem} \label{q2} Let $k$ be a positive integer, and let $G$ be a nontrivial graph with minimum degree $\delta$.
What eigenvalue conditions imply that $G$ has property $P(k, \delta)$?
\end{problem}

The main results are listed as below.

\begin{theorem}\label{s2}
Let $k$ be a positive integer, and let $G$ be a graph with minimum degree $\delta\ge 2k+2$ and order $n\ge 2\delta+3$.
If $\lambda_1(G)\ge \lambda_1(B_{n,\delta+1}^{k-1})$, then
$G$ has property $P(k, \delta)$ unless  $G\cong B_{n,\delta+1}^{k-1}$.
\end{theorem}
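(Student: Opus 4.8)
I would argue by contradiction: suppose $G\not\cong B_{n,\delta+1}^{k-1}$ yet $G$ fails to have property $P(k,\delta)$. By the contrapositive of Theorem~\ref{Fang}, $\nu_f(G)\le k+\frac{\delta-1}{\delta}$, so by Theorem~\ref{NW} $\tau(G)=\lfloor\nu_f(G)\rfloor\le k$. If $\nu_f(G)<k$ then $\tau(G)<k$; for $k\ge2$ this puts us in the hypotheses of Theorem~\ref{spectral} (valid since $\delta\ge2k+2\ge2k$), which forces $G\cong B_{n,\delta+1}^{k-1}$, a contradiction, while for $k=1$ one checks directly that a disconnected graph of order $n\ge2\delta+3$, minimum degree $\ge\delta$, and spectral radius $\ge n-\delta-2$ must be $K_{n-\delta-1}\cup K_{\delta+1}=B_{n,\delta+1}^{0}$: a component $C$ attaining the spectral radius has $|C|-1\ge\lambda_1(C)\ge n-\delta-2$, and since the remaining $n-|C|$ vertices also have minimum degree $\ge\delta$ we must have $|C|=n-\delta-1$, forcing $C=K_{n-\delta-1}$ and the rest $K_{\delta+1}$. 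If instead $\nu_f(G)\ge k+1$, then $\nu_f(G)>k+\frac{\delta-1}{\delta}$ and Theorem~\ref{Fang} gives $P(k,\delta)$, again a contradiction. So we may assume $\tau(G)=k$, and we fix a partition $\mathcal P=\{V_1,\dots,V_t\}$ with $t\ge2$ attaining $\nu_f(G)$; writing $n_i=|V_i|$ and $s=\sum_{i<j}e(V_i,V_j)$, we have $k(t-1)\le s<(k+1)(t-1)$.

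The heart of the proof — and the step I expect to be the main obstacle — is to pin down the structure of $G$ from $\mathcal P$ and the eigenvalue hypothesis. First, since $\delta(G)\ge\delta$, every part of order at most $\delta$ sends at least $\delta n_i-n_i(n_i-1)\ge\delta$ edges to its complement, and because the sum of these over all parts equals $2s<\delta(t-1)$ (this is exactly where $\delta\ge2k+2$ enters), at most $t-2$ parts have order $\le\delta$; hence at least two parts have order $\ge\delta+1$, which forces $n_1:=\max_i n_i\le n-\delta-1$, with equality only if $t=2$ and the other part has order exactly $\delta+1$. I would then combine $\lambda_1(G)\ge\lambda_1(B_{n,\delta+1}^{k-1})\ge n-\delta-2$ with an inequality of the form $\lambda_1(G)\le\lambda_1(G[V_1])+\eta$, where $\eta$ is a small quantity controlled by $s$ and $n_1$ — obtained by decomposing the Rayleigh quotient of the Perron vector of $G$ along $\mathcal P$ and exploiting that the $s$ cross edges sit peripherally to the large part $V_1$ — to conclude that $t=2$, that $n_1=n-\delta-1$ and $n_2=\delta+1$, that the integer $s$ equals $k$, and that $G[V_1]$ and $G[V_2]$ are complete graphs with only a bounded (in $k$) number of edges removed, all near the at most $k$ endpoints of the cross edges. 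A short computation with Theorem~\ref{NW} then shows that such graphs still satisfy $\tau(G[V_1])\ge k+1$ and $\tau(G[V_2])\ge k+1$, since $\tau(K_m)=\lfloor m/2\rfloor\ge k+1$ for $m\ge2k+3$ and removing this few edges from $K_{\delta+1}$ or from $K_{n-\delta-1}$ cannot bring the packing number below $k+1$ when $\delta\ge2k+2$ and $n\ge2\delta+3$.

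Granting this structure, the contradiction is produced by exhibiting property $P(k,\delta)$ directly. Pick $k+1$ edge-disjoint spanning trees $T^{(1)}_1,\dots,T^{(1)}_k,S^{(1)}$ of $G[V_1]$ and $k+1$ edge-disjoint spanning trees $T^{(2)}_1,\dots,T^{(2)}_k,S^{(2)}$ of $G[V_2]$, and let $e_1,\dots,e_k$ be the cross edges. Each $T_j:=T^{(1)}_j\cup\{e_j\}\cup T^{(2)}_j$ is a spanning tree of $G$, the $T_1,\dots,T_k$ are pairwise edge-disjoint, and $F:=S^{(1)}\cup S^{(2)}$ is a forest, edge-disjoint from all of them, with exactly two components and $|E(F)|=(n_1-1)+(n_2-1)=n-2$. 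Since $n>\delta+1$ we have $n-2>\frac{\delta-1}{\delta}(n-1)$; since $n\ge2\delta+3$ the component $S^{(1)}$ has $n-\delta-2\ge\delta$ edges and $S^{(2)}$ has exactly $\delta$ edges. Thus $F$ certifies property $P(k,\delta)$ for $G$, contradicting our assumption; hence $G$ has property $P(k,\delta)$ unless $G\cong B_{n,\delta+1}^{k-1}$.

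As indicated, the delicate part is the structural step — forcing $t=2$, $n_2=\delta+1$, and the near-completeness of $G[V_1]$. This is where the eigenvalue condition does its work, and it will require the same kind of case analysis on part sizes and on the placement of the cross edges that underlies Theorem~\ref{spectral}, now adapted to the weaker partition bound $s<(k+1)(t-1)$ and the stronger degree bound $\delta\ge2k+2$; the regimes with $\delta$ close to $2k+2$, with $n$ close to $2\delta+3$, or with several parts of order just above $\delta+1$ are the most demanding, and a spectral estimate that accounts for the peripheral attachment of the cross edges seems indispensable for disposing of them.
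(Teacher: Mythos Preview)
Your overall framework --- contradiction via Theorem~\ref{Fang}, a witnessing partition with $s<(k+1)(t-1)$, the observation that at least two parts have order $\ge\delta+1$, and the final construction of $k$ spanning trees plus a two-component forest from $(k+1)$-packings of the two sides --- matches the paper. But the step you flag as ``the heart of the proof'' is a genuine gap, and the tool you propose for it is not the right one. An inequality of the shape $\lambda_1(G)\le\lambda_1(G[V_1])+\eta$ coming from a Rayleigh decomposition along $\mathcal P$ does not give usable control of $\eta$ here: the Perron vector need not concentrate on $V_1$, the cross edges need not sit ``peripherally'', and even when $t=2$ there is no clean bound that forces $n_2=\delta+1$ and near-completeness of both sides. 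In fact those conclusions are stronger than what is needed or true.

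The paper avoids all of this by replacing the spectral structural step with an \emph{edge count} plus the Hong--Nikiforov bound (Lemma~\ref{upper}). For $t\ge 3$ one bounds $e(G)\le\sum_i\binom{n_i}{2}+s$, uses Lemmas~\ref{t1} and~\ref{choose} together with $t\le n-2\delta$ to get $e(G)<\binom{\delta+1}{2}+\binom{n-\delta-2}{2}+2(k+1)$, and then Lemma~\ref{upper} gives $\lambda_1(G)<n-\delta-2\le\lambda_1(B_{n,\delta+1}^{k-1})$ directly. For $t=2$ there is a short dichotomy: if $e(V_1,V_2)\le k-1$ one is in $\mathcal G_{n,b}^{k-1}$ and Lemmas~\ref{B}--\ref{change} (or Lemma~\ref{disconnected} when $k=1$) finish; if $e(V_1,V_2)=k$ and both $G[V_i]$ miss at most $k$ edges from complete, then Lemma~\ref{p=2} gives $\tau(G[V_i])\ge k+1$ and your construction of $F$ applies; otherwise $e(G)<\binom{\delta+1}{2}+\binom{n-\delta-1}{2}$ and Lemma~\ref{upper} again yields $\lambda_1(G)<n-\delta-2$. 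So the missing idea is not a finer Rayleigh estimate but the Hong--Nikiforov edge-to-eigenvalue bound, which turns the partition information into a spectral contradiction in one line. (Your initial detour through $\nu_f(G)<k$ and Theorem~\ref{spectral} is also unnecessary: the single inequality $s<(k+1)(t-1)$ from Theorem~\ref{Fang} already covers that regime.)
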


We remark that the lower bound $2k+2$ on $\delta$ in Theorem \ref{s2} can not be lowered to $2k$ as in Theorem \ref{spectral} or even $2k+1$, generally. We give such examples.

(i) Let $H_1$ be the graph in Fig.~\ref{F123}. By a direct calculation, we have $5.1919=\lambda_1(H_1)>\lambda_1(B_{11,5}^{1})=5.0561$.
Note that $\delta(H_1)=4=2\times 2$, $\tau(H_1)=2$, and apart from two edge-disjoint spanning trees (whose edges are displayed as bold and thin lines), there is another forest $F$ (whose edges are displayed as dashed lines) with $8=|E(F)|>\frac{3}{4}\times 10=\frac{15}{2}$. However,  $F$ is not a spanning tree, and $F$ has no component with at least $4$ edges, so $H_1$ does not have property $P(2, 4)$.

\begin{figure}[htbp]
\centering
\begin{tikzpicture}[scale=1]
\draw  [dashed, blue](-2,2)--(-1,2)--(-1,1)--(-2,1);
\filldraw [black] (-2,2) circle (2pt);
\filldraw [black] (-1,2) circle (2pt);
\filldraw [black] (-1,1) circle (2pt);
\filldraw [black] (-2,1) circle (2pt);
\draw  [dashed, blue](1,2)--(1,1)--(2,2)--(2,1);
\filldraw [black] (1,2) circle (2pt);
\filldraw [black] (1,1) circle (2pt);
\filldraw [black] (2,2) circle (2pt);
\filldraw [black] (2,1) circle (2pt);
\draw  [dashed, blue](-1,-0.5)--(1,-0.5)--(0,-1);
\filldraw [black] (-1,-0.5) circle (2pt);
\filldraw [black] (1,-0.5) circle (2pt);
\filldraw [black] (0,-1) circle (2pt);
\draw  [line width=1.3pt, black](-1,2)--(-2,1)--(-2,2)--(-1,1);
\draw  [line width=1.3pt,black](-2,1)--(-1,-0.5)--(0,-1)--(2,1)--(1,1)--(1,-0.5);
\draw  [line width=1.3pt,black](2,1)--(1,2)--(2,2);
\draw  [black](-1,2)--(1,2)--(0,-1)--(2,2)--(1,-0.5)--(2,1);
\draw  [black](1,1)--(-1,1)--(-1,-0.5);
\draw  [black](-1,1)--(-1,-0.5)--(-2,2);
\draw  [black](-2,1)--(0,-1);
\draw  [black](-1,1)--(1,-0.5);
\end{tikzpicture}
\caption{Graph $H_1$.}
\label{F123}
\end{figure}
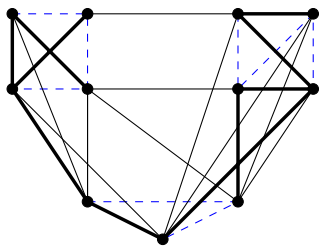

(ii)  Let $H_2$ be the graph obtained from $K_{16}\cup K_{17}$ by removing two independent edge $uv, xy$ from $K_{16}$ and adding edges
$uw, vw, xw, yw$ and three other edges connecting three vertices of $K_{16}$ to some vertex $w$ of $K_{17}$. By a  calculation, we have $16.1578=\lambda_1(H_2)>\lambda_1(B_{33,16}^{6})=15.1645$.
Note that $\delta(H_2)=15=2\times 7+1$ and that $\tau(H)\ge 7$. Apart from $7$ edge-disjoint spanning trees, any forest has at most $13+16=29$ edges as there are only $13$ edges left in $K_{16}$ after removing two independent edges and the edges of $7$ spanning trees. However, $29<\frac{14}{15}\times 32=\frac{448}{15}$, so $H_2$  does not have property $P(7, 15)$.

\begin{theorem}\label{s1}
Let $k$ be a positive integer, and let $G$ be a graph with minimum degree $\delta\ge 2k+2$.
Suppose that
\[
\lambda_2(G)<\delta-\frac{2\left(k+\frac{\delta-1}{\delta}\right)}{\delta+1},
\]
then $G$ has property $P(k, \delta)$.
\end{theorem}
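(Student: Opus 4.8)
The plan is to deduce everything from the Tree Packing Theorem (Theorem \ref{NW}) together with Theorem \ref{Fang}. By Theorem \ref{Fang} with $d=\delta$, property $P(k,\delta)$ follows as soon as we know $\nu_f(G)>k+\frac{\delta-1}{\delta}$, and since $\frac{\delta-1}{\delta}<1$ it is enough to prove the cleaner statement $\tau(G)\ge k+1$: by Theorem \ref{NW} this yields $\nu_f(G)\ge k+1>k+\frac{\delta-1}{\delta}$. (One can even bypass Theorem \ref{Fang}: given $k+1$ edge-disjoint spanning trees $T_0,T_1,\dots,T_k$ of $G$, the trees $T_1,\dots,T_k$ together with the forest $F:=T_0$ already witness $P(k,\delta)$, since $|E(F)|=|V(G)|-1>\frac{\delta-1}{\delta}(|V(G)|-1)$ and $F$, being a spanning tree, makes condition (c) vacuous.)

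Thus the whole task reduces to showing $\tau(G)\ge k+1$, and for this I would simply invoke Theorem \ref{min} with $k+1$ in place of $k$. Its hypotheses hold: $k+1\ge 2$ because $k\ge 1$; the minimum-degree requirement $\delta\ge 2(k+1)$ is exactly the assumption $\delta\ge 2k+2$; and the required spectral bound $\lambda_2(G)<\delta-\frac{2(k+1)-1}{\delta+1}=\delta-\frac{2k+1}{\delta+1}$ is implied by the hypothesis of Theorem \ref{s1}, because
\[
2\left(k+\frac{\delta-1}{\delta}\right)=2k+2-\frac{2}{\delta}>2k+1
\]
whenever $\delta>2$, and here $\delta\ge 2k+2\ge 4$; hence $\delta-\frac{2\left(k+\frac{\delta-1}{\delta}\right)}{\delta+1}<\delta-\frac{2k+1}{\delta+1}$. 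Therefore Theorem \ref{min} applies and gives $\tau(G)\ge k+1$, which finishes the argument.

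I do not expect a genuine obstacle along this route; the only point needing care is the numerical comparison above, which also explains why the statement requires $\delta\ge 2k+2$ rather than the weaker bound $\delta\ge 2k$ appearing in Theorem \ref{spectral}: it is precisely what allows Theorem \ref{min} to be invoked at level $k+1$. Should one instead want a self-contained proof not quoting Theorem \ref{min}, the natural plan would be to argue by contradiction: assuming $\nu_f(G)\le k+\frac{\delta-1}{\delta}$, fix a partition $\mathcal{P}=\{V_1,\dots,V_t\}$ of $V(G)$ with $\frac{\sum_{i<j}e(V_i,V_j)}{t-1}\le k+\frac{\delta-1}{\delta}$; the minimum-degree hypothesis together with the smallness of the cut bounds the relevant vertex-set sizes from below (and in particular forces $n\ge 2\delta+2$), after which eigenvalue interlacing applied to a quotient matrix of $\mathcal{P}$ should yield $\lambda_2(G)\ge\delta-\frac{2\left(k+\frac{\delta-1}{\delta}\right)}{\delta+1}$, a contradiction. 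In that approach the delicate case is $t\ge 3$, where one must work with the full $t\times t$ quotient matrix rather than a single two-set cut; but the short route via Theorem \ref{min} avoids this entirely and is the one I would use.
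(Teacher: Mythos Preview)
Your argument is correct, and it is genuinely different from the paper's. The paper does not quote Theorem~\ref{min}; instead it proves directly, for every partition $V_1,\dots,V_p$, that $\sum_i r_i>2\bigl(k+\frac{\delta-1}{\delta}\bigr)(p-1)$ (where $r_i=e(V_i,V\setminus V_i)$), by ordering the $r_i$, isolating the indices with $r_i<2\bigl(k+\frac{\delta-1}{\delta}\bigr)$, and for each such index $i\ge 2$ applying interlacing (Lemmas~\ref{inter} and~\ref{quo}) to the $2\times 2$ quotient of $A(G[V_1\cup V_i])$ to get $e(V_1,V_i)+r_i>2\bigl(k+\frac{\delta-1}{\delta}\bigr)$; summing and using $r_1\ge\sum_{i\ge 2}e(V_1,V_i)$ yields $\nu_f(G)>k+\frac{\delta-1}{\delta}$, and Theorem~\ref{Fang} finishes.

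Your route is much shorter and, in fact, exposes something the paper's argument hides: since $2\bigl(k+\frac{\delta-1}{\delta}\bigr)>2k+1$ for $\delta\ge 3$, the spectral hypothesis of Theorem~\ref{s1} is strictly stronger than that of Theorem~\ref{min} at level $k+1$, so the stated assumption already forces the stronger conclusion $\tau(G)\ge k+1$, from which $P(k,\delta)$ is immediate with $F$ a spanning tree. The paper's self-contained interlacing proof has the virtue of not relying on the external result~\cite{LHGL} and of displaying the mechanism tailored to the fractional threshold $k+\frac{\delta-1}{\delta}$, but under the hypothesis as written it does not yield anything your shortcut misses. Your alternative sketch for a self-contained proof (quotient-matrix interlacing on the offending partition) is essentially what the paper carries out, except that the paper avoids a full $p\times p$ quotient by reducing to $2\times 2$ quotients on $V_1\cup V_i$.
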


To attain the conclusion  Theorem \ref{s1},  the lower bound $2k+2$ on  $\delta$  can not be lowered to $2k$ as in Theorem \ref{min} or even $2k+1$,  generally.

(i) For $k\ge 1$, $\lambda_2(K_{2k+1})=-1$, which is less than the bound in Theorem \ref{s1}. Note that $\delta(K_{2k+1})=2k$ and $\tau(K_{2k+1})=k$ \cite{Pa}. However, apart from
$k$ edge-disjoint spanning trees, there are $k\le 2k-1$ edges, so $K_{2k+1}$ does not have property $P(k, 2k)$.

(ii) Let $H$ be the Petersen graph. Note that $\lambda_2(H)=1<3-\frac{2\times \left(1+\frac{2}{3}\right)}{3+1}=\frac{13}{6}$ and $\tau (H)=1$. Apart from one spanning tree, there are $6$ edges.  But
$6=\frac{3-1}{3}(10-1)$, so $H$ does not have property $P(1,3)$.
For $k\ge 2$, $\lambda_2(K_{2k+1,2k+1})=0$, which is less than the bound of Theorem \ref{s1}. Note that $\delta(K_{2k+1,2k+1})=2k+1$ and $\tau(K_{2k+1,2k+1})=k$ \cite{Pa}. However, apart from
$k$ edge-disjoint spanning trees, there are $3k+1$ edges. As $3k+1< \frac{2k}{2k+1}(4k+1)$,  $K_{2k+1,2k+1}$ does not have property $P(k, 2k+1)$.

The rest of the paper is organized as follows. In Section 2 we give lemmas that will be used. Theorems \ref{s2} are \ref{s1} are proved in Sections 3 and 4, respectively.

\section{Preliminaries}

Let $G$ be a graph. For a vertex $v$ of $G$, we denote by $d_G(v)$ the degree of $v$ in $G$.
%We use  $\overline{d}(G)$ for the average degree of $G$, and $\kappa'(G)$ for the edge connectivity of $G$.
Denote by $G[S]$ the subgraph of $G$ induced by $S$ if $\emptyset\ne S\subseteq V(G)$,
and $G-E_1$ the graph with vertex set $V(G)$ and edge set
$E(G)\setminus E_1$ if $E_1\subseteq E(G)$, and in particular, we write $G-f$ for $G-\{f\}$ when $f\in E(G)$.
Given two graphs $G$ and $H$, let $G\cup H$ denote the disjoint union of $G$ and $H$.

If all the eigenvalues of an $n\times n$ matrix $B$
are real, then we denote them  by
$\lambda_1(B),\dots, \lambda_n(B)$ with $\lambda_1(B)\ge \dots\ge \lambda_n(B)$.
For an $n$-vertex graph $G$, the adjacency matrix of $G$ is the $n\times n$ matrix
$A(G)=(a_{uv})_{u,v\in V(G)}$, where $a_{uv}=1$ if $uv\in E(G)$ and $a_{uv}=0$ otherwise.
Evidently, $\lambda_i(G)=\lambda_i(A(G))$ for $i=1,\dots, n$.
Note that $\sum_{i=1}^{n}\lambda_i(G)=0$.

The following sharp upper bound on the spectral radius was obtained by Hong et al. \cite{Hong} and Nikiforov \cite{Nikiforov}.
\begin{lemma}\label{upper}
Let $G$ be a graph on $n$ vertices and $m$ edges with minimum degree $\delta\ge 1$. Then
\[
\lambda_1(G)\le \frac{\delta-1}{2}+\sqrt{2m-n\delta +\frac{(\delta+1)^2}{4}}.
\]
\end{lemma}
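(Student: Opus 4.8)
The plan is to reduce the stated bound to an equivalent quadratic inequality in $\lambda_1(G)$ and then to establish that inequality by means of the Perron eigenvector. Write $\lambda=\lambda_1(G)$, and recall $\lambda\ge 2m/n\ge\delta\ge 1$, so $\lambda-\frac{\delta-1}{2}>0$ and both sides of the asserted inequality are positive. Squaring, the inequality is equivalent to
\[
\left(\lambda-\frac{\delta-1}{2}\right)^2\le 2m-n\delta+\frac{(\delta+1)^2}{4}.
\]
Expanding the left-hand side and using $\frac{(\delta+1)^2-(\delta-1)^2}{4}=\delta$, this is in turn equivalent to
\[
\lambda^2-(\delta-1)\lambda\le 2m-(n-1)\delta,
\]
so it suffices to prove this last inequality.

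Next I would bring in the eigenvector. Let $x=(x_v)_{v\in V(G)}$ be a positive eigenvector for $\lambda$, scaled so that $\max_v x_v=x_p=1$ for some vertex $p$. Put $N=N_G(p)$ and $R=V(G)\setminus(N\cup\{p\})$, so $|N|=d_G(p)$ and $|R|=n-1-d_G(p)$. From $\lambda x_p=\sum_{j\in N}x_j$ we get $\lambda=\sum_{j\in N}x_j$, and iterating the eigenvalue equation,
\[
\lambda^2=\lambda\sum_{j\in N}x_j=\sum_{j\in N}\lambda x_j=\sum_{j\in N}\sum_{k\sim j}x_k .
\]
The crude estimate $x_k\le 1$ gives only $\lambda^2\le\sum_{j\in N}d_G(j)$, which is too weak: equality in the lemma is attained by the complete split graph (a clique on $\delta$ vertices completely joined to an independent set on $n-\delta$ vertices), and for that graph the crude bound is strict. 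The lesson is that the eigenvector entries on the second neighbourhood of $p$ must be retained.

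Accordingly I would keep the identities exact. Using $\sum_{j\in N}d_G(j)=2m-d_G(p)-\sum_{v\in R}d_G(v)$ together with the two displays above, a direct rearrangement shows that the target inequality is equivalent to
\[
(\delta-1)\sum_{j\in N}(1-x_j)\le\sum_{j\in N}\sum_{k\sim j}(1-x_k)+\sum_{v\in R}\bigl(d_G(v)-\delta\bigr),
\]
in which every term is nonnegative because $x_v\le 1$ and $d_G(v)\ge\delta$. This is the heart of the matter. To prove it I would compare, term by term, the coefficient of each $(1-x_j)$ with $j\in N$: on the right it occurs with coefficient $\bigl|N_G(j)\cap N\bigr|=d_G(j)-1-\bigl|N_G(j)\cap R\bigr|$ (the neighbours of $j$ outside $N$ being $p$ together with $N_G(j)\cap R$), while on the left it occurs with coefficient $\delta-1$. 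The resulting deficit $\sum_{j\in N}\bigl[(\delta-1)-|N_G(j)\cap N|\bigr]^{+}(1-x_j)$ has to be absorbed by the surplus where $|N_G(j)\cap N|>\delta-1$, by the $R$-terms $\sum_{v\in R}(d_G(v)-\delta)$, and by the edges between $N$ and $R$ that appear in the double sum as $\sum_{w\in R}|N_G(w)\cap N|(1-x_w)$.

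I expect this absorption step to be the main obstacle. Bounding $1-x_j\le 1$ is too lossy—it simply recreates the slack that the complete split example shows cannot be discarded—so one must control the eigenvector values on $R$ jointly with those on $N$, for instance by invoking the eigenvalue equation $\lambda x_w=\sum_{u\sim w}x_u$ at each $w\in R$ to tie $x_w$ to the entries $x_j$ with $j\in N\cap N_G(w)$. Once this coupling is set up correctly the two sides balance, with equality reflecting the extremal complete split configuration. I anticipate that the careful bookkeeping of this balancing, rather than any single clever inequality, is the real technical content of the proof.
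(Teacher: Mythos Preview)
The paper does not prove this lemma: it is quoted as a known result of Hong--Shu--Fang and Nikiforov, with no argument given. So there is no ``paper's own proof'' to compare against; I comment on your attempt on its own terms.

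Your reduction is correct and is exactly the classical eigenvector approach. The displayed inequality
\[
(\delta-1)\sum_{j\in N}(1-x_j)\le\sum_{j\in N}\sum_{k\sim j}(1-x_k)+\sum_{v\in R}(d_G(v)-\delta)
\]
is indeed equivalent to the target, and your idea of invoking the eigenvalue equation at each $w\in R$ is the right one. But you stop there, labelling the remaining step an ``obstacle'' and saying you ``anticipate'' it works. That is the gap: the proof is not finished.

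In fact the completion is short and does not require delicate bookkeeping. Writing, for $j\in N$, $|N(j)\cap N|=d_G(j)-1-|N(j)\cap R|$, and for $w\in R$ using $\lambda x_w=\sum_{u\in N(w)\cap N}x_u+\sum_{u\in N(w)\cap R}x_u$ together with $d_G(w)=|N(w)\cap N|+|N(w)\cap R|$, your inequality simplifies (after the $e(N,R)$ terms cancel) to
\[
\sum_{j\in N}(d_G(j)-\delta)(1-x_j)\;+\;\sum_{w\in R}\Bigl[(d_G(w)-\delta)-(d_G(w)-\lambda)x_w\Bigr]\;\ge\;0.
\]
Every summand in the first sum is nonnegative. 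For the second sum, if $d_G(w)\le\lambda$ the bracket is at least $d_G(w)-\delta\ge 0$; if $d_G(w)>\lambda$ then $(d_G(w)-\lambda)x_w\le d_G(w)-\lambda$ and the bracket is at least $\lambda-\delta\ge 0$. So the inequality holds term by term, and the lemma follows. A minor point you should also address: taking a \emph{positive} Perron eigenvector presumes $G$ is connected; for disconnected $G$ apply the argument to a component realising $\lambda_1(G)$ and then check (easily) that the bound only improves when the remaining vertices and edges are included.
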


By the well-known Perron-Frobenius theorem, we can easily deduce the following lemma.
\begin{lemma}\label{subgraph}
If $H$ is a subgraph of a connected graph $G$, then
\[
\lambda_1(H)\le \lambda_1(G)
\]
with equality if and only if $H\cong G$.
\end{lemma}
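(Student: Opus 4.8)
The plan is to reduce everything to a statement about nonnegative matrices and then invoke the Perron--Frobenius theorem, exploiting that $G$ connected makes $A(G)$ irreducible. First I would realize $H$ on the vertex set of $G$: let $\tilde A$ be the matrix indexed by $V(G)$ whose $(u,v)$ entry equals the corresponding entry of $A(H)$ when $u,v\in V(H)$, and is $0$ otherwise. Padding with zero rows and columns only appends isolated vertices, so $\lambda_1(\tilde A)=\lambda_1(H)$ (the extra eigenvalues are $0$, and $\lambda_1(H)\ge 0$). Because $E(H)\subseteq E(G)$, we get the entrywise domination $0\le \tilde A\le A(G)$, with both matrices symmetric and nonnegative.

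For the inequality I would pick a nonnegative eigenvector $z$ of $\tilde A$ for its spectral radius (such $z\ge 0$, $z\ne 0$ exists by Perron--Frobenius for nonnegative matrices), so that $\lambda_1(H)=\lambda_1(\tilde A)=\dfrac{z^{\top}\tilde A z}{z^{\top}z}$. Since $\tilde A\le A(G)$ entrywise and $z\ge 0$, every term of the quadratic form satisfies $z_u z_v \tilde A_{uv}\le z_u z_v A(G)_{uv}$, whence $z^{\top}\tilde A z\le z^{\top}A(G)z$. Combining this with the Rayleigh bound $z^{\top}A(G)z\le \lambda_1(G)\,z^{\top}z$ yields
\[
\lambda_1(H)=\frac{z^{\top}\tilde A z}{z^{\top}z}\le \frac{z^{\top}A(G)z}{z^{\top}z}\le \lambda_1(G).
\]

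The equality case is where the real work lies. If $\lambda_1(H)=\lambda_1(G)$, both inequalities above must be equalities. Equality in the Rayleigh bound forces $z$ to be a $\lambda_1(G)$-eigenvector of $A(G)$; since $G$ is connected, $A(G)$ is irreducible, so by Perron--Frobenius $\lambda_1(G)$ is simple with a strictly positive eigenvector, and hence $z$ is a positive multiple of it, giving $z_u>0$ for all $u$. Equality in $z^{\top}\tilde A z= z^{\top}A(G)z$ then forces $z_u z_v\bigl(A(G)_{uv}-\tilde A_{uv}\bigr)=0$ for every pair $u,v$; with $z>0$ this means $\tilde A=A(G)$. Thus $E(H)=E(G)$, and because a connected graph on at least two vertices has no isolated vertex while $\tilde A$ has a zero row for each vertex of $V(G)\setminus V(H)$, we also obtain $V(H)=V(G)$. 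Hence $H\cong G$; the converse implication is trivial.

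The step I expect to be the main obstacle is the equality analysis: carefully extracting strict positivity of $z$ from irreducibility and then propagating it through the entrywise equality to conclude that neither an edge nor a vertex can be missing. I would also keep the degenerate cases ($H$ edgeless, or $G$ a single vertex) in mind, though they are immediate.
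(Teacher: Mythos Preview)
Your argument is correct and is essentially the approach the paper has in mind: the paper does not give a detailed proof but simply states that the lemma follows from the Perron--Frobenius theorem, and your write-up is exactly a careful unpacking of that (entrywise domination plus the Rayleigh quotient, with irreducibility of $A(G)$ yielding the strict positivity needed for the equality case).
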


Consider two sequences of real numbers: $\eta_1\ge \eta_2\ge \dots \ge \eta_n$ and $\mu_1\ge \mu_2\ge \dots \ge \mu_m$ with $m<n$. The second sequence is said to interlace the first one whenever
\[
\eta_i\ge \mu_i\ge \eta_{n-m+i} ~ for ~i=1,2,\dots, m.
\]

%Cioab\v{a} and  Wong \cite{Cio} derived the next useful lemma in plain text from
%the  well-known Cauchy Interlacing Theorem,  stating that
%for a real symmetric matrix $A$ and a principal submatrix $B$ of $A$, the eigenvalues of $B$ interlace the eigenvalues of $A$, see \cite{Bro, God}.

The following lemma is the well-known Cauchy Interlacing Theorem,  see \cite{Bro, God}.

\begin{lemma}\label{inter}
Let $A$ be a real symmetric matrix and $B$ be a principal submatrix of $A$. Then the eigenvalues of $B$ interlace the eigenvalues of $A$.
\end{lemma}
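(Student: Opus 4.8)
The plan is to derive the interlacing inequalities from the Courant--Fischer variational (min-max) characterization of the eigenvalues of a real symmetric matrix. Write $A$ for the $n\times n$ real symmetric matrix with eigenvalues $\eta_1\ge\cdots\ge\eta_n$, and let $B$ be the $m\times m$ principal submatrix with eigenvalues $\mu_1\ge\cdots\ge\mu_m$, where $m<n$; the goal is $\eta_i\ge\mu_i\ge\eta_{n-m+i}$ for $i=1,\dots,m$. First I would reduce to the case that $B$ is the leading principal block of $A$: a principal submatrix is indexed by a subset $I\subseteq\{1,\dots,n\}$ with $|I|=m$, and conjugating $A$ by the permutation matrix that sends $I$ to $\{1,\dots,m\}$ leaves the spectrum of $A$ unchanged while moving $B$ into the top-left corner. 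So without loss of generality $B$ occupies rows and columns $1,\dots,m$.

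Next I would record the Rayleigh-quotient bridge between the two matrices. For $x\in\mathbb{R}^m$ let $\hat{x}\in\mathbb{R}^n$ be its zero-padded extension, whose first $m$ coordinates are those of $x$ and whose remaining coordinates are $0$. Because $B$ is the leading block, $\hat{x}^{\top}A\hat{x}=x^{\top}Bx$ and $\|\hat{x}\|=\|x\|$, so the Rayleigh quotient of $B$ at $x$ equals that of $A$ at $\hat{x}$. Moreover $x\mapsto\hat{x}$ is a linear injection, so every $i$-dimensional subspace $V\subseteq\mathbb{R}^m$ maps to an $i$-dimensional subspace $\hat{V}\subseteq\mathbb{R}^n$ lying inside the coordinate subspace spanned by $e_1,\dots,e_m$.

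I would then prove the upper inequalities $\mu_i\le\eta_i$ using the max-min form
\[
\eta_i=\max_{\dim U=i}\ \min_{0\ne y\in U}\frac{y^{\top}Ay}{y^{\top}y},\qquad \mu_i=\max_{\dim V=i}\ \min_{0\ne x\in V}\frac{x^{\top}Bx}{x^{\top}x}.
\]
For any $i$-dimensional $V\subseteq\mathbb{R}^m$, the bridge gives $\min_{0\ne x\in V}x^{\top}Bx/x^{\top}x=\min_{0\ne y\in\hat{V}}y^{\top}Ay/y^{\top}y\le\eta_i$, since $\hat{V}$ is an admissible $i$-dimensional competitor for the outer maximum defining $\eta_i$; taking the maximum over $V$ yields $\mu_i\le\eta_i$. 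For the lower inequalities $\mu_i\ge\eta_{n-m+i}$ I would run the symmetric argument with the min-max form $\eta_j=\min_{\dim U=n-j+1}\max_{0\ne y\in U}y^{\top}Ay/y^{\top}y$ (taking $j=n-m+i$, so the competing subspaces have dimension $m-i+1$), or equivalently apply the already-proved upper bound to $-A$ and $-B$ and reindex, noting that the eigenvalues of $-A$ are $-\eta_n\ge\cdots\ge-\eta_1$ and that $-B$ is a principal submatrix of $-A$.

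The main point to handle carefully is the bookkeeping of subspace dimensions and eigenvalue indices in the two directions of Courant--Fischer --- in particular verifying that the dimension $n-m+i$ on the lower side is exactly what the min-max formula produces, which is where an off-by-one error would creep in. The only genuine input beyond elementary linear algebra is the Courant--Fischer theorem itself; everything else is the zero-padding identification above, which makes the restriction of the quadratic form from $A$ to $B$ transparent.
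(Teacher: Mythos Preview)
Your argument via Courant--Fischer is correct: the zero-padding identification shows that every $i$-dimensional subspace of $\mathbb{R}^m$ is an admissible competitor in the max--min for $\eta_i$, giving $\mu_i\le\eta_i$, and the passage to $-A,-B$ (or the dual min--max with subspaces of dimension $m-i+1$) yields $\mu_i\ge\eta_{n-m+i}$ with the correct index shift.

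As for comparison with the paper: the paper does not actually prove this lemma. It simply states it as the well-known Cauchy Interlacing Theorem and refers the reader to the textbooks of Brouwer--Haemers and Godsil--Royle. Your Courant--Fischer proof is precisely the standard argument one finds in those references, so you have supplied what the paper chose to cite rather than reprove.
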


Suppose that $G$ is a graph and $V(G)$ is partitioned as $V_1\cup \dots\cup V_m$. For $1\le i<j\le m$, denote by $A_{ij}$  the submatrix of $A(G)$ with rows corresponding to vertices in $V_i$ and columns corresponding to vertices in $V_j$. The quotient matrix of $A(G)$ with respect to this partition is the matrix  $B=(b_{ij})$, where $b_{ij}=\frac{1}{|V_i|}\sum_{u\in V_i}\sum_{v\in V_j}a_{uv}$.
%If $A_{ij}$ has constant row sum for all $i$ and $j$ with  $1\le i<j\le m$, then we say $B$ is an equitable quotient matrix of $A(G)$.

The following lemma is a special case of Corollary 2.3 in \cite{Hae}, see also \cite{Bro, God}.

\begin{lemma}\label{quo}
For a graph $G$, if $B$ is a quotient matrix of $A(G)$, then the eigenvalues of $B$ interlace the eigenvalues of $A(G)$.
\end{lemma}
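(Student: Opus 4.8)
The plan is to reduce the statement to the Cauchy Interlacing Theorem (Lemma \ref{inter}) by symmetrizing the quotient matrix, which in general is \emph{not} symmetric. Write $A=A(G)$ and $n_i=|V_i|$ for $i=1,\dots,m$, and introduce the $n\times m$ characteristic matrix $S=(s_{uj})$ with $s_{uj}=1$ if $u\in V_j$ and $s_{uj}=0$ otherwise, so that the columns of $S$ are the indicator vectors of the parts. A direct computation yields $S^{\top}S=D:=\operatorname{diag}(n_1,\dots,n_m)$ and $(S^{\top}AS)_{ij}=\sum_{u\in V_i}\sum_{v\in V_j}a_{uv}$. Comparing with the definition of the quotient matrix, I would record that $B=D^{-1}S^{\top}AS$.

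Since $B$ need not be symmetric, I would not apply Lemma \ref{inter} to it directly. Instead, conjugate by $D^{1/2}=\operatorname{diag}(\sqrt{n_1},\dots,\sqrt{n_m})$ and set
\[
\widetilde{B}:=D^{1/2}BD^{-1/2}=D^{-1/2}S^{\top}AS\,D^{-1/2}.
\]
Then $B$ and $\widetilde{B}$ are similar, hence share the same eigenvalues, and these eigenvalues are real because $\widetilde{B}$ is symmetric. Writing $\widetilde{S}:=SD^{-1/2}$, one checks $\widetilde{S}^{\top}\widetilde{S}=D^{-1/2}S^{\top}SD^{-1/2}=I_m$, so $\widetilde{S}$ has orthonormal columns and $\widetilde{B}=\widetilde{S}^{\top}A\widetilde{S}$ is exactly the compression of $A$ to the column space of $\widetilde{S}$.

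The final step is to realize $\widetilde{B}$ as a principal submatrix of a matrix cospectral with $A$. Extend $\widetilde{S}$ to an $n\times n$ orthogonal matrix $U$ by appending $n-m$ further orthonormal columns. Then $U^{\top}AU$ is symmetric with the same eigenvalues as $A$, and its leading $m\times m$ principal block equals $\widetilde{S}^{\top}A\widetilde{S}=\widetilde{B}$. Applying Lemma \ref{inter} to $U^{\top}AU$ and this principal submatrix shows that the eigenvalues of $\widetilde{B}$ interlace those of $U^{\top}AU$, equivalently those of $A$. Because $B$ and $\widetilde{B}$ have the same spectrum, the eigenvalues of $B$ interlace the eigenvalues of $A$, as required.

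The main obstacle is precisely the asymmetry of $B$: both the interlacing definition and Lemma \ref{inter} presuppose real eigenvalues and a symmetric ambient matrix, neither of which $B$ satisfies on its face. The symmetrization $B\mapsto D^{1/2}BD^{-1/2}$ resolves both difficulties simultaneously, certifying that $B$ has real eigenvalues and converting the problem into the genuine principal-submatrix setting that Lemma \ref{inter} requires.
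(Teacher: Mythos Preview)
Your argument is correct. The paper itself does not give a proof of this lemma; it simply cites Haemers~\cite{Hae} (and Brouwer--Haemers~\cite{Bro}, Godsil--Royle~\cite{God}) and states it as a known special case. The proof you have written---symmetrizing $B$ to $\widetilde{B}=D^{-1/2}S^{\top}A S D^{-1/2}$, noting that $\widetilde{S}=SD^{-1/2}$ has orthonormal columns, extending to an orthogonal matrix $U$, and then invoking Cauchy interlacing (Lemma~\ref{inter}) on the principal block of $U^{\top}AU$---is precisely the standard argument found in those references, so there is nothing to compare.
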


\begin{lemma}\label{U} Let $G$ be a graph with minimum degree $\delta\ge 1$. Let $\emptyset\ne U\subset V(G)$. If $e(U,V\setminus U)\le \delta-1$, then $|U|\ge \delta+1$.
\end{lemma}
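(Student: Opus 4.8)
\emph{Proof strategy.} The plan is to run a double counting of degrees inside the induced subgraph $G[U]$. For each $v\in U$, let $e_v$ be the number of edges of $G$ joining $v$ to a vertex of $V\setminus U$; then $\sum_{v\in U}e_v=e(U,V\setminus U)\le\delta-1$, since each edge of $E(U,V\setminus U)$ has exactly one end in $U$. On one hand, as $G[U]$ is a simple graph on $|U|$ vertices we have $d_{G[U]}(v)\le|U|-1$; on the other hand $d_{G[U]}(v)=d_G(v)-e_v\ge\delta-e_v$. Combining these gives $|U|-1\ge\delta-e_v$ for every $v\in U$.

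Next I would sum this inequality over all $v\in U$, obtaining $|U|(|U|-1)\ge|U|\delta-\sum_{v\in U}e_v=|U|\delta-e(U,V\setminus U)\ge|U|\delta-(\delta-1)$, which rearranges to $|U|^2-(\delta+1)|U|+(\delta-1)\ge 0$. Finally I would examine the quadratic $g(t)=t^2-(\delta+1)t+(\delta-1)$: direct substitution gives $g(1)=g(\delta)=-1<0$, so since $g$ is an upward-opening parabola, both $1$ and $\delta$ lie strictly between its two roots, and hence $g(t)<0$ for all $t\in[1,\delta]$. Therefore $|U|$ cannot take any value in $\{1,2,\dots,\delta\}$, and as $U\ne\emptyset$ forces $|U|\ge 1$, we conclude $|U|\ge\delta+1$.

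I do not expect any genuine obstacle here; the argument is a few lines of degree counting plus an elementary quadratic estimate. The only points that warrant a moment's care are the endpoint evaluation $g(1)=g(\delta)=-1$ (so that the "forbidden" interval provably swallows the whole range $1,\dots,\delta$ rather than just part of it), and not losing the degenerate case $|U|=1$ — but that case causes no trouble, since a single vertex $v\in U$ would have $e_v=d_G(v)\ge\delta>\delta-1$, already contradicting the hypothesis, so one may as well assume $|U|\ge 2$ from the outset.
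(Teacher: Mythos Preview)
Your proposal is correct and uses essentially the same degree-counting argument as the paper: both bound $\sum_{v\in U}d_G(v)$ below by $\delta|U|$ and above by $|U|(|U|-1)+e(U,V\setminus U)$. The only cosmetic difference is the endgame: the paper simply assumes $|U|\le\delta$, replaces $|U|(|U|-1)$ by $\delta(|U|-1)$, and obtains the one-line contradiction $\delta|U|\le\delta|U|-1$, whereas you route the same inequality through a quadratic in $|U|$ --- valid but a bit more work than necessary.
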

\begin{proof}
If $|U|\le \delta$, then
\[
\delta |U|\le \sum_{u\in U}d_G(u)\le |U|(|U|-1)+e(U,V\setminus U)\le \delta(|U|-1)+\delta-1=\delta |U|-1,
\]
a contradiction.
\end{proof}

\section{Spectral radius condition: Proof of Theorem \ref{s2}}

For positive integers $n$, $k$, and $s$ with $n\ge s+k$ and $k\ge 2$, let  $\mathcal{G}_{n,s}^{k}$ be the set of graphs obtained from $K_{s}\cup K_{n-s}$ by adding $k$ edges between $K_{s}$ and $K_{n-s}$. It is evident that
$B_{n,s}^{k}\in \mathcal{G}_{n,s}^{k}$.

\begin{lemma}\label{disconnected}
Let $G$ be a disconnected graph on $n$ vertices with minimum degree $\delta$, where $n\ge 2\delta+2$.
Then $\lambda_1(G)\le n-\delta-2$ with equality if and only if $G\cong B_{n,\delta+1}^{0}$.
\end{lemma}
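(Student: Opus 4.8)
The plan is to use the elementary fact that the adjacency matrix of a disconnected graph is block diagonal, so that $\lambda_1(G)$ equals the maximum of $\lambda_1(C)$ over the connected components $C$ of $G$, and then to combine this with the minimum-degree hypothesis, which forces every component to be fairly large. Throughout I would write $n_C=|V(C)|$ for a component $C$.

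For the inequality, first note that since $\delta(G)=\delta$, every component of $G$ has at least $\delta+1$ vertices (a vertex of a component of order $t$ has degree at most $t-1$ inside that component, so $t\ge\delta+1$). As $G$ is disconnected it has at least two components, so any single component $C$ satisfies $n_C\le n-(\delta+1)=n-\delta-1$. Now $C$ is a subgraph of the connected graph $K_{n_C}$, so Lemma \ref{subgraph} gives $\lambda_1(C)\le\lambda_1(K_{n_C})=n_C-1\le n-\delta-2$, where the last step is an equality only when $n_C=n-\delta-1$, and the first step is an equality (again by Lemma \ref{subgraph}) only when $C\cong K_{n_C}$. Taking the maximum over all components yields $\lambda_1(G)\le n-\delta-2$.

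For the equality characterization, suppose $\lambda_1(G)=n-\delta-2$. Then some component $D$ attains $\lambda_1(D)=n-\delta-2$, and the chain of inequalities above forces $n_D=n-\delta-1$ and $D\cong K_{n-\delta-1}$. The union of the remaining components then has exactly $n-n_D=\delta+1$ vertices; being a disjoint union of components each of order at least $\delta+1$, it must consist of a single component on $\delta+1$ vertices, and since its minimum degree (inherited from $G$) is at least $\delta$, it is $K_{\delta+1}$. Hence $G\cong K_{\delta+1}\cup K_{n-\delta-1}=B_{n,\delta+1}^{0}$. Conversely, $\lambda_1(B_{n,\delta+1}^{0})=\max\{\delta,\,n-\delta-2\}=n-\delta-2$, where the last equality uses $n\ge 2\delta+2$. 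I do not expect a genuine obstacle here; the only points needing care are tracking the two equality conditions simultaneously in the chain $\lambda_1(C)\le n_C-1\le n-\delta-2$, and invoking $n\ge 2\delta+2$ both to guarantee that a disconnected graph of minimum degree $\delta$ on $n$ vertices exists and to identify which side of $B_{n,\delta+1}^{0}$ realizes the spectral radius.
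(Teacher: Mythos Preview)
Your proof is correct and follows essentially the same approach as the paper: both bound $\lambda_1$ of each component by $n_C-1$ via Lemma~\ref{subgraph}, use the minimum-degree hypothesis to force $n_C\le n-\delta-1$, and then pin down the equality case. The only cosmetic difference is that the paper phrases it as an extremal argument (take a spectral-radius maximizer and identify it), whereas you prove the upper bound directly for every disconnected $G$; the underlying steps are identical.
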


\begin{proof} Suppose that $G$ is a disconnected graph on $n$ vertices with minimum degree $\delta$
that maximizes the spectral radius. Then, for some component $H$ of $G$,
\[
\lambda_1(H)=\lambda_1(G)\ge \lambda_1(K_{\delta+1}\cup K_{n-\delta-1})=\max\{\delta, n-\delta-2\}=n-\delta-2.
\]
So $|V(H)|\ge \lambda_1(H)+1\ge n-\delta-1$. As the minimum degree of $G$ is $\delta$, $G$ consists of two components,
$|V(H)|=n-\delta-1$, $G-V(H)\cong K_{\delta+1}$,  and so we have by Lemma \ref{subgraph} that $H\cong K_{n-\delta-1}$. Thus $G\cong K_{\delta+1}\cup K_{n-\delta-1}=B_{n,\delta+1}^{0}$.
\end{proof}

\begin{lemma}\label{B}\cite{Fan1}
Let $G\in \mathcal{G}_{n,\delta+1}^{k-1}$ where $k\ge 2$, $n\ge 2\delta+3$, and $\delta\ge 2k$. Then $\lambda_1(G)\le \lambda_1(B_{n,\delta+1}^{k-1})$ with equality if and only if $G\cong B_{n,\delta+1}^{k-1}$.
\end{lemma}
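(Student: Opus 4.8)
The plan is to take a graph $G$ in $\mathcal{G}_{n,\delta+1}^{k-1}$ of maximum spectral radius (the class is finite, so one exists) and to prove $G\cong B_{n,\delta+1}^{k-1}$; since $B_{n,\delta+1}^{k-1}$ itself belongs to $\mathcal{G}_{n,\delta+1}^{k-1}$, this yields both the inequality and the equality characterization at once. Write $A=V(K_{\delta+1})$ and $B=V(K_{n-\delta-1})$ for the two cliques, so $|A|=\delta+1<\delta+2\le n-\delta-1=|B|$, and let $W_A\subseteq A$ and $W_B\subseteq B$ be the sets of vertices meeting a crossing edge. Since $k-1\ge1$ the graph $G$ is connected, so its Perron vector $\mathbf{x}$ is positive, and since there are only $k-1$ crossing edges, both $A\setminus W_A$ and $B\setminus W_B$ are nonempty.

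I would use two ingredients. First, a description of $\mathbf{x}$ on the cliques: any two vertices of $A\setminus W_A$ are true twins and hence share a common value $a$ in $\mathbf{x}$, while every vertex of $W_A$ strictly dominates each vertex of $A\setminus W_A$ and therefore has $\mathbf{x}$-value exceeding $a$; the same holds on the $B$-side. In particular a vertex of $A$ of maximum $\mathbf{x}$-value lies in $W_A$; fix such a vertex $w$. Second, the rotation estimate: for a non-edge $pq$ with $p\in A$, $q\in B$ and a crossing edge $pq'$ (so $q'\in B$), the graph $G-pq'+pq$ again lies in $\mathcal{G}_{n,\delta+1}^{k-1}$ and
\[
\mathbf{x}^{\top}A(G-pq'+pq)\,\mathbf{x}-\mathbf{x}^{\top}A(G)\,\mathbf{x}=2x_p\left(x_q-x_{q'}\right);
\]
hence if $x_q\ge x_{q'}$ the spectral radius does not decrease, and if it stays equal then $\mathbf{x}$ must be a Perron vector of the new graph as well, forcing the $p$-coordinate of the change in $A(G)\mathbf{x}$ to vanish — which is impossible since that coordinate equals $x_q>0$ whenever the rotation genuinely attaches a new edge at $p$.

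Now suppose $|W_A|\ge2$. For $u\in W_A\setminus\{w\}$: if $u$ has a crossing edge $ub$ with $w\not\sim b$, the rotation $ub\to wb$ contradicts the maximality of $\lambda_1(G)$ (apply the estimate with $x_w\ge x_u$, handling the equality case as above). So every crossing neighbour of every vertex of $W_A$ is a neighbour of $w$, which forces $N_B(w)=W_B$; consolidating the remaining crossing edges onto a maximum-$\mathbf{x}$ vertex $w'$ of $W_A\setminus\{w\}$ in the same way, one is left with a bounded family of candidate graphs, each a copy of $K_{\delta+1}\cup K_{n-\delta-1}$ in which one vertex $w$ of $K_{\delta+1}$ is joined to $t$ vertices of $K_{n-\delta-1}$ and a second vertex $w'$ is joined to some of those same $t$ vertices, with $\lceil(k-1)/2\rceil\le t\le k-2$; the graph $B_{n,\delta+1}^{k-1}$ is the limiting member $t=k-1$ of this family. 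Each such candidate admits an equitable partition of small order (split $A$ into $\{w\}$, $\{w'\}$ and the rest; split $B$ by crossing-degree $2$, $1$, $0$), so its spectral radius is the largest root of an explicit polynomial of bounded degree with coefficients polynomial in $n$, $\delta$, $k$, $t$; the remaining task is to verify, for $n\ge2\delta+3$ and $\delta\ge2k$, that every such candidate with $t<k-1$ has spectral radius strictly less than $\lambda_1(B_{n,\delta+1}^{k-1})$. This contradicts maximality, so $|W_A|=1$, the $k-1$ crossing edges all leave $w$ for $k-1$ distinct vertices of $B$, and $G\cong B_{n,\delta+1}^{k-1}$.

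The main obstacle is this last step, the spectral comparison of the residual candidates with $B_{n,\delta+1}^{k-1}$. The difficulty is structural rather than notational: the one rotation that naturally pushes a candidate toward $B_{n,\delta+1}^{k-1}$ moves a crossing edge onto $w$ but out to a crossing-free vertex of $B$, and one checks that this \emph{lowers} the Rayleigh quotient of the current Perron vector even while it raises $\lambda_1$ — so no single test vector decides the comparison and one is forced to work with the explicit characteristic polynomials. This is presumably also where the hypotheses $n\ge2\delta+3$ (forcing the second clique to be strictly larger) and $\delta\ge2k$ enter in a quantitative way.
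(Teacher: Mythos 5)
The paper itself gives no proof of this lemma --- it is imported verbatim from \cite{Fan1} --- so your argument has to stand on its own, and it does not. The decisive step is missing: after the rotation/consolidation phase you are left with a family of candidate extremal graphs, and you explicitly defer the comparison of their spectral radii with $\lambda_1(B_{n,\delta+1}^{k-1})$ to explicit characteristic polynomials of bounded degree without carrying it out. That comparison is the entire content of the lemma: it is exactly where the hypotheses $n\ge 2\delta+3$ and $\delta\ge 2k$ must enter quantitatively (you yourself observe that no single Rayleigh-quotient test vector decides it), and you supply neither the polynomials nor the inequality between their largest roots. As written, the argument only shows that a maximizer has all crossing edges attached to centres in $K_{\delta+1}$ whose neighbourhoods in $K_{n-\delta-1}$ lie inside that of the top vertex $w$; it does not show the maximizer is $B_{n,\delta+1}^{k-1}$.

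A second, smaller defect: the reduction to the two-centre family ($w$ joined to $t$ vertices, $w'$ joined to some of the same $t$, with $\lceil(k-1)/2\rceil\le t\le k-2$) is not what your rotations give. The moves $ub\to wb$ and then $ub\to w'b$ only yield a nested chain $N_B(w_m)\subseteq\dots\subseteq N_B(w_2)\subseteq N_B(w_1)=W_B$ for the centres $w_1=w, w_2=w',\dots$ chosen in decreasing Perron order; configurations with three or more centres (for instance three vertices of $K_{\delta+1}$ all joined to one common vertex of $K_{n-\delta-1}$, which is possible once $k\ge 4$) survive every rotation you permit, because the move that would eliminate them changes both endpoints of an edge and, as you note, has a Rayleigh increment of indefinite sign. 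So the family that must be beaten in the final comparison is strictly larger than you state, which only enlarges the unverified part. A minor slip as well: in the equality case of a rotation the contradiction should be read off at a coordinate whose incidences actually change (e.g.\ the vertex losing the edge, whose entry in $A(G)\mathbf{x}$ drops by a positive amount, forcing a zero coordinate of the Perron vector); the change at the common endpoint $p$ is $x_q-x_{q'}$, which in the equality case is $0$ and gives no contradiction.
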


\begin{lemma}\label{change}\cite{Fan1}
Let $G\in \mathcal{G}_{n,b}^{k-1}$ where $k\ge 2$,  $n\ge 2b$, $b\ge \delta+2$, and $\delta\ge 2k$. Then $\lambda_1(G)<\lambda_1(B_{n,\delta+1}^{k-1})$.
\end{lemma}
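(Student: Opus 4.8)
## Proof strategy for Lemma \ref{change}

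The plan is to follow the same spectral-comparison template used for Lemma \ref{B}, but now comparing two graphs whose ``small'' cliques have different orders ($K_b$ versus $K_{\delta+1}$ with $b\ge\delta+2$), so the two spectral radii cannot be read off a single quotient matrix. First I would reduce to the extremal graph within $\mathcal{G}_{n,b}^{k-1}$: since $B_{n,b}^{k-1}$ maximizes the spectral radius over $\mathcal{G}_{n,b}^{k-1}$ (this is exactly Lemma \ref{B} with $\delta+1$ replaced by $b$, valid since $n\ge 2b$ and $b-1\ge\delta+1\ge 2k+1>2k$), it suffices to prove $\lambda_1(B_{n,b}^{k-1})<\lambda_1(B_{n,\delta+1}^{k-1})$. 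So the whole lemma collapses to a monotonicity statement: among the ``broom-like'' graphs $B_{n,s}^{k-1}$ with $k$ fixed, the spectral radius is strictly decreasing in $s$ on the relevant range $\delta+1\le s\le n-k$.

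For that monotonicity I would set up the equitable (or near-equitable) partition of $B_{n,s}^{k-1}$ into a constant number of parts — the apex vertex of $K_s$ incident to the $k-1$ cross edges, the remaining $s-1$ vertices of $K_s$, the $k-1$ attachment vertices in $K_{n-s}$, and the other $n-s-k+1$ vertices of $K_{n-s}$ — and work with the resulting $4\times 4$ quotient matrix $B_s$, whose largest eigenvalue equals $\lambda_1(B_{n,s}^{k-1})$ because the partition is equitable and the graph is connected (Lemma \ref{quo} gives interlacing, and the Perron eigenvector is constant on parts). Writing $f_s(x)=\det(xI-B_s)$, I would show that for $x=\lambda_1(B_{n,\delta+1}^{k-1})$ one has $f_s(x)>0$ while $x$ exceeds the second-largest eigenvalue of $B_s$; since $f_s$ is eventually positive and $\lambda_1(B_s)$ is its largest root, $f_s(x)>0$ together with $x>\lambda_2(B_s)$ forces $\lambda_1(B_s)<x$, which is the claim. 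Equivalently — and this may be cleaner to write — I would exhibit an explicit perturbation: starting from $B_{n,\delta+1}^{k-1}$ and moving one vertex at a time from the large clique side to the small clique side to reach $B_{n,s}^{k-1}$, each such step deletes more edges incident to high-Perron-weight vertices than it creates, and a Rayleigh-quotient estimate with the Perron vector of the larger graph shows the spectral radius strictly drops; iterating from $s=\delta+1$ up to $s=b$ gives the strict inequality. The hypotheses $n\ge 2b$ and $\delta\ge 2k$ are what guarantee the large clique stays the ``dominant'' side throughout, so the monotonicity direction does not reverse.

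The main obstacle I anticipate is the bookkeeping in the quotient-matrix comparison: the two characteristic polynomials $f_{\delta+1}$ and $f_b$ are quartics with coefficients depending on $n$, $\delta$, $b$, and $k$, and showing $f_b(\lambda_1(B_{n,\delta+1}^{k-1}))>0$ requires either a careful sign analysis of $f_b-f_{\delta+1}$ evaluated at that root, or good enough explicit bounds on $\lambda_1(B_{n,\delta+1}^{k-1})$ (for instance $n-\delta-2\le\lambda_1(B_{n,\delta+1}^{k-1})<n-\delta-2+\epsilon$ for a controlled $\epsilon$, obtainable from Lemma \ref{subgraph} since $B_{n,\delta+1}^{k-1}$ contains $K_{n-\delta-1}$ and from Lemma \ref{upper}) to pin down the sign. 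I would lean on the Rayleigh-quotient perturbation argument to sidestep most of this, since there one only needs the qualitative fact that the moved vertex has strictly smaller Perron weight than the apex vertices whose neighborhoods shrink — a comparison that follows from the structure of the eigenvector equations rather than from solving the quartic. Once the one-step decrease is established, the rest is a short induction on $s$.
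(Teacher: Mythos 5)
You should first note that the paper never proves Lemma \ref{change} at all (it is imported from \cite{Fan1}), so your argument has to stand on its own, and as written it has a genuine gap: the central quantitative comparison is never actually established. Two concrete problems. First, the reduction step: invoking Lemma \ref{B} ``with $\delta+1$ replaced by $b$'' requires, after the substitution, $n\ge 2(b-1)+3=2b+1$, whereas Lemma \ref{change} only assumes $n\ge 2b$; the case $n=2b$ is exactly the borderline case that can occur in the paper's application (Case 1 of Theorem \ref{s2} with $|V_1|=|V_2|$), so it cannot be discarded. Second, and more seriously, the monotonicity you rely on is false on the range you state: since $\lambda_1(B_{n,s}^{k-1})\ge\max\{s-1,\,n-s-1\}$, the spectral radius is not decreasing on $\delta+1\le s\le n-k$ (it turns around near $s=n/2$; e.g.\ $\lambda_1(B_{n,n-\delta-1}^{k-1})>n-\delta-2$). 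You only need $s\le b\le n/2$, but on that range neither of your two mechanisms is carried out: the quotient-matrix route is reduced to ``show $f_b(x)>0$ at $x=\lambda_1(B_{n,\delta+1}^{k-1})$'', which is essentially the lemma itself, and the vertex-moving Rayleigh argument hinges on the claim that, with respect to the Perron vector $x$ of $B_{n,s+1}^{k-1}$, the deleted edges at the moved vertex outweigh the added ones, i.e.\ $\sum_{u\in K_{n-s-1}}x_u>\sum_{u\in K_{s+1}\setminus\{v\}}x_u$. That is precisely the delicate point at the last step when $n=2b$: there the two cliques of $B_{n,b}^{k-1}$ have equal order $b$, the apex sits on the ``small'' side and has the largest degree $b+k-2\ge b$, and the needed inequality does not follow from ``the moved vertex has smaller Perron weight than the apex''; it requires a genuine eigenvector estimate that you do not supply.

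A complete short proof is available from the paper's own toolkit, bypassing monotonicity altogether, and is presumably close to the argument in \cite{Fan1}. For $G\in\mathcal{G}_{n,b}^{k-1}$ one has $e(G)=\binom{b}{2}+\binom{n-b}{2}+k-1$ and minimum degree exactly $b-1$ (some vertex of $K_b$ misses all $k-1$ added edges because $b\ge\delta+2\ge 2k+2>k-1$, and $n-b-1\ge b-1$), so Lemma \ref{upper} gives
\[
\lambda_1(G)\le \frac{b-2}{2}+\sqrt{\left(n-\frac{3b}{2}\right)^2+2k-2},
\]
and since $2n-3b\ge b\ge 2k+2>2k-3$ this is strictly less than $n-b\le n-\delta-2$; on the other hand $K_{n-\delta-1}$ is a proper subgraph of the connected graph $B_{n,\delta+1}^{k-1}$ (here $k\ge 2$), so $\lambda_1(B_{n,\delta+1}^{k-1})>n-\delta-2$ by Lemma \ref{subgraph}. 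If you prefer to keep your perturbation strategy, the missing ingredient you must add is a proof of the one-step Perron-weight inequality above, valid up to and including $n=2b$.
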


\begin{lemma}\label{choose}
For integers $x, y$ and $a$ with $x,y\ge a\ge 2$, ${x\choose 2}+{y\choose 2}\le {a\choose 2}+{x+y-a\choose 2}$.
\end{lemma}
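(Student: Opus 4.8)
The plan is to treat this as a discrete convexity statement: with the sum $x+y$ held fixed, the quantity ${x\choose 2}+{y\choose 2}$ is largest, subject to the constraints $x,y\ge a$, when one of the two variables is pushed all the way down to $a$.

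First I would assume without loss of generality that $x\le y$, so that $a\le x\le y$, and set $t=x-a\ge 0$. Since $x+y-a=y+t$, the claimed inequality ${x\choose 2}+{y\choose 2}\le {a\choose 2}+{x+y-a\choose 2}$ is equivalent to
\[
{a+t\choose 2}-{a\choose 2}\le {y+t\choose 2}-{y\choose 2}.
\]
Next I would record the elementary identity ${m+t\choose 2}-{m\choose 2}=\tfrac{1}{2}\,t(2m+t-1)$, valid for every integer $m\ge 1$ and every integer $t\ge 0$; this is just expanding both binomials. For fixed $t\ge 0$ the right side is a nondecreasing function of $m$, and since $a\le y$ we obtain $\tfrac12 t(2a+t-1)\le \tfrac12 t(2y+t-1)$, which is precisely the displayed inequality. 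Tracing the substitution back yields the lemma.

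There is no real obstacle here; the only points needing care are the reduction to $a\le x\le y$ and the sign bookkeeping, noting that $t=0$ is allowed (in which case the inequality is an equality) and that the closed-form difference must be valid for all admissible $m\ge 1$. One could equally well argue from convexity of ${m\choose 2}$, or transfer one unit at a time from $x$ toward $a$, but the explicit formula for ${m+t\choose 2}-{m\choose 2}$ makes the verification a single line.
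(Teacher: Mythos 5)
Your proof is correct and amounts to the same elementary computation as the paper's one-line argument: the paper observes that the inequality is equivalent to $(x-a)(y-a)\ge 0$, and your concluding monotonicity step $\tfrac12 t(2a+t-1)\le \tfrac12 t(2y+t-1)$ is exactly that inequality in disguise, since the difference of the two sides is $t(y-a)=(x-a)(y-a)$. The convexity/shifting repackaging is perfectly valid but adds nothing beyond the direct expansion.
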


\begin{proof} It is equivalent to the trivial inequality  $(x-a)(y-a)\ge 0$.
\end{proof}

\begin{lemma}\label{t1}
For positive integers $a_1,\dots, a_p$,
\[
\sum_{i=1}^p{a_i\choose 2}\le {\sum_{i=1}^p a_i-p+1\choose 2}.
\]
\end{lemma}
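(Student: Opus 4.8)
The plan is to prove this by a short double‑counting argument (with an inductive reformulation as a backup). For the double‑counting proof, take $p$ pairwise vertex‑disjoint complete graphs $K_{a_1},\dots,K_{a_p}$; together they have $\sum_{i=1}^p a_i$ vertices and exactly $\sum_{i=1}^p\binom{a_i}{2}$ edges. Now select one vertex from each $K_{a_i}$ and identify all $p$ selected vertices into a single vertex $w$. Since the cliques were pairwise disjoint, after this identification no two of them share an edge, and no edge becomes a loop or a parallel edge, so the resulting \emph{simple} graph $H$ still has exactly $\sum_{i=1}^p\binom{a_i}{2}$ edges, while $|V(H)|=\sum_{i=1}^p a_i-(p-1)$. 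Because $H$ is simple, $|E(H)|\le\binom{|V(H)|}{2}=\binom{\sum_{i=1}^p a_i-p+1}{2}$, which is precisely the claimed inequality.

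As an alternative, one can induct on $p$. The base case $p=1$ is an equality, so it suffices to have the two‑term inequality $\binom{x}{2}+\binom{y}{2}\le\binom{x+y-1}{2}$ for all integers $x,y\ge 1$; this follows at once from the identity $\binom{x+y-1}{2}-\binom{x}{2}-\binom{y}{2}=(x-1)(y-1)\ge 0$. For the inductive step, apply the hypothesis to $a_1,\dots,a_{p-1}$ to get $\sum_{i=1}^{p-1}\binom{a_i}{2}\le\binom{A}{2}$ with $A=\sum_{i=1}^{p-1}a_i-p+2$, note $A\ge 1$ since each $a_i\ge 1$, and then finish by $\binom{A}{2}+\binom{a_p}{2}\le\binom{A+a_p-1}{2}=\binom{\sum_{i=1}^p a_i-p+1}{2}$ using the two‑term inequality.

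I do not expect a real obstacle here; the only points requiring a moment's care are (i) verifying that the vertex‑identified graph $H$ is genuinely simple, which is exactly where pairwise disjointness of the $K_{a_i}$ is used, and (in the inductive variant) (ii) checking $A\ge 1$ so that $\binom{A}{2}$ and the two‑term step are legitimate. I will likely present the inductive version since it meshes with the style of Lemma~\ref{choose}; note that Lemma~\ref{choose} itself only yields the weaker merging step $\binom{x}{2}+\binom{y}{2}\le\binom{a}{2}+\binom{x+y-a}{2}$ with $a\ge 2$, and iterating it with $a=2$ (after discarding the terms with $a_i=1$) still gives the bound but less cleanly, so the two routes above are preferable.
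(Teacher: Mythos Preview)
Your inductive argument is correct and is essentially identical to the paper's proof: the paper also inducts on $p$, applies the hypothesis to $a_1,\dots,a_{p-1}$, and finishes with the two-term identity $\binom{a+b-1}{2}-\binom{a}{2}-\binom{b}{2}=(a-1)(b-1)\ge 0$. Your double-counting proof via identifying one vertex from each of $p$ disjoint cliques is a valid and slightly slicker alternative that the paper does not use; it trades the explicit induction for the single observation that a simple graph on $N$ vertices has at most $\binom{N}{2}$ edges, at the cost of the small check that the identification produces no multi-edges.
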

\begin{proof}
We prove the inequality by induction on $p$. It is trivial if $p=1$.
Suppose that $p\ge 2$, and that
$\sum_{i=1}^{p-1} {a_i\choose 2}\le {\sum_{i=1}^{p-1}a_i-(p-1)+1\choose 2}$.
Then
\begin{align*}
\sum_{i=1}^p{a_i\choose 2}&=\sum_{i=1}^{p-1} {a_i\choose 2}+{a_p\choose 2}\\
&\le {\sum_{i=1}^{p-1}a_i-(p-1)+1\choose 2}+{a_p\choose 2}.
\end{align*}
Let $a=\sum_{i=1}^{p-1}a_i-(p-1)+1$ and $b=a_p$. Then
\[
{a+b-1\choose 2}-{a\choose 2}-{b\choose 2}=(a-1)(b-1)\ge 0,
\]
so ${a\choose 2}+{b\choose 2}\le {a+b-1\choose 2}$, implying that
\[
\sum_{i=1}^p{a_i\choose 2}
\le {\sum_{i=1}^{p-1}a_i-(p-1)+1+a_p-1\choose 2}={\sum_{i=1}^p a_i-p+1\choose 2}.
\]
Thus, the desired inequality follows.
\end{proof}

\begin{lemma}\label{p=2}
Let $n\ge 2k +3$ and $k\ge 1$ be integers. Let $G$ be a graph obtained by $K_n$ deleting $k$ edges. Then $\tau(G)\ge k+1$.
\end{lemma}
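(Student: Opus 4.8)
The plan is to apply the Tree Packing Theorem (Theorem~\ref{NW}) in its fractional form: it suffices to verify that $\nu_f(G)\ge k+1$, i.e.\ that for every partition $\mathcal{P}=\{V_1,\dots,V_p\}$ of $V(G)$ with $p\ge 2$ one has $\sum_{1\le i<j\le p}e(V_i,V_j)\ge (k+1)(p-1)$.

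First I would write $G=K_n-E_0$ with $|E_0|=k$, and fix a partition with part sizes $a_1,\dots,a_p$, where each $a_i\ge 1$ and $\sum_i a_i=n$, so $2\le p\le n$. In $K_n$ the number of edges joining two distinct parts equals $\binom{n}{2}-\sum_{i=1}^p\binom{a_i}{2}$, and removing the $k$ edges of $E_0$ destroys at most $k$ of them, whence
\[
\sum_{1\le i<j\le p}e(V_i,V_j)\ \ge\ \binom{n}{2}-\sum_{i=1}^p\binom{a_i}{2}-k.
\]
By Lemma~\ref{t1} we have $\sum_{i=1}^p\binom{a_i}{2}\le\binom{n-p+1}{2}$ (heuristically, the cross-edge count in $K_n$ is minimized by the partition consisting of one large part together with $p-1$ singletons), so the whole lemma reduces to the purely numerical claim
\[
\binom{n}{2}-\binom{n-p+1}{2}-k\ \ge\ (k+1)(p-1)\qquad (2\le p\le n).
\]

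To finish I would set $m=p-1\in\{1,\dots,n-1\}$ and use the identity $\binom{n}{2}-\binom{n-m}{2}=\tfrac{1}{2}m(2n-m-1)$ to rewrite the inequality as $g(m):=-m^2+(2n-2k-3)m-2k\ge 0$. Since $g$ is concave in $m$, it is enough to check the two endpoints: $g(1)=2(n-2k-2)>0$ and $g(n-1)=(n-1)(n-2k-2)-2k>0$, both immediate from $n\ge 2k+3$; concavity then yields $g(m)\ge 0$ for all $m\in[1,n-1]$.

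There is no real obstacle in this argument. The only points requiring a little care are that the reduction through Lemma~\ref{t1} is legitimate, and that one checks the quadratic inequality at \emph{both} endpoints $m=1$ and $m=n-1$ (rather than only at $m=1$), since the target $(k+1)(p-1)$ grows linearly with $p$; the hypothesis $n\ge 2k+3$ is exactly what this forces, and it is sharp, as for $k=1$ the graph $K_4$ minus an edge has only $5<6$ edges and so cannot contain $2$ edge-disjoint spanning trees.
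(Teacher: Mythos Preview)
Your proof is correct and follows essentially the same route as the paper's own argument. Both apply the Tree Packing Theorem together with Lemma~\ref{t1} to reduce the claim to a one-variable quadratic inequality in the number of parts, and then verify it by checking the two endpoints; the only cosmetic difference is that the paper argues by contradiction (bounding $e(G)$ from above via a convex parabola $f(p)$ and comparing with $e(G)=\binom{n}{2}-k$), whereas you argue directly (bounding the cross-edge count from below via the concave $g(m)$).
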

\begin{proof}
Suppose that $\tau(G)<k+1$. By Theorem \ref{NW}, there exists some partition $V(G)=V_1\cup \dots \cup V_p$ of $V(G)$ with $2\le p \le n$   such that
\[
\sum_{1\le i<j\le t}e(V_i,V_j)\le (k+1)(p-1)-1.
\]
Then
\begin{align*}
e(G)&=\sum_{i=1}^{p}e(G[V_i])+\sum_{1\le i<j\le p}e(V_i,V_j)\\
&\le \sum_{i=1}^{p}{|V_i|\choose 2}+(k+1)(p-1)-1.
\end{align*}
Now, by  Lemma \ref{t1} and letting $f(p)=\frac{p^2}{2}-(n-k-\frac{1}{2})p+\frac{n(n+1)}{2}-k-2$,  we have
\begin{align*}
e(G)
&\le {n-p+1\choose 2}+(k+1)(p-1)-1\\
&=f(p).
\end{align*}
As $n\ge 2k+3$, we have $f(2)-f(n)=\frac{1}{2}n^2-(k+\frac{5}{2})n+2k+3\ge
\frac{1}{2}(2k+3)^2-(k+\frac{5}{2})(2k+3)+2k+3=0$, so $f(2)\ge f(n)$.
As $n\ge 2k+3$ and $k\ge 1$, we have
$2< n-k-\frac{1}{2}< n$.
As $2\le p\le n$, we have $f(p)\le \max\{f(2),
f(n)\}=f(2)=\frac{n^2}{2}-\frac{3n}{2}+k+1$.
So $e(G)\le \frac{n^2}{2}-\frac{3n}{2}+k+1$. However, as $n\ge 2k+3$, we have
\[
e(G)={n\choose 2}-k=\frac{n^2}{2}-\frac{n}{2}-k=\frac{n^2}{2}-\frac{3n}{2}+n-k>\frac{n^2}{2}-\frac{3n}{2}+k+1,
\]
a contradiction.
\end{proof}

We are now ready to prove Theorem \ref{s2}.

\begin{proof}[Proof of Theorem \ref{s2}]
Suppose that $G$ is a graph for which Theorem \ref{s2} is not true.  That is,
 $G$ is a graph with minimum degree $\delta\ge 2k+2$ and order $n\ge 2\delta+3$ such that
\begin{equation}\label{aaa}
\lambda_1(G)\ge \lambda_1(B_{n,\delta+1}^{k-1})
\end{equation}
but
$G$ does not have  property $P(k, \delta)$ unless  $G\cong B_{n,\delta+1}^{k-1}$.

By Theorem \ref{Fang},  $\nu_f(G)\le k+\frac{\delta-1}{\delta}$, so there exists some partition $\mathcal{P}$ of $V(G)$ into $p=|\mathcal{P}|$ subsets $V_1,\dots,V_p$ with $p\ge 2$ such that
\begin{equation} \label{11}
\sum_{1\le i<j\le p}e(V_i,V_j)\le \left(k+\frac{\delta-1}{\delta}\right)(p-1)<(k+1)(p-1).
\end{equation}

\noindent
{\bf Case 1.} $p=2$.

From \eqref{11}, we have $e(V_1,V_2)\le k$. By Lemma \ref{U}, we have $|V_1|,|V_2|\ge \delta+1$. Suppose that  $e(V_1,V_2)\le k-1$. If $k=1$,  then we have by Lemma \ref{disconnected} that
\[
\lambda_1(G)\le n-\delta-2=\lambda_1(B_{n,\delta+1}^{k-1})
\]
with equality if and only if $G\cong B_{n,\delta+1}^{k-1}$, contradicting the assumption.  Suppose that $k\ge 2$.  Assume that $|V_1|\le |V_2|$. For the cases  $|V_1|=\delta+1$ and $|V_1|\ge \delta+2$, we have
by Lemmas \ref{B} and \ref{change}, respectively, together with Lemma \ref{subgraph}, that
\[
\lambda_1(G)\le \lambda_1(B_{n,\delta+1}^{k-1})
\]
with equality if and only if $G \cong B_{n,\delta+1}^{k-1}$, contradicting the assumption again. %\eqref{aaa}.
This shows that  $e(V_1,V_2)=k$.

\begin{claim}\label{case1a} $e(G[V_1])+e(G[V_2])< {|V_1|\choose 2}+{|V_2|\choose 2}-k$.
\end{claim}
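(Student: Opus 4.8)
The plan is to prove the claim by contradiction, exploiting the standing assumption that $G$ does not have property $P(k,\delta)$. So suppose instead that $e(G[V_1])+e(G[V_2])\ge\binom{|V_1|}{2}+\binom{|V_2|}{2}-k$. Setting $t_i=\binom{|V_i|}{2}-e(G[V_i])$ for the number of non-edges of $G$ inside $V_i$, this reads $t_1+t_2\le k$; in particular $t_1,t_2\le k$, so each $G[V_i]$ is a copy of $K_{|V_i|}$ with at most $k$ edges deleted.

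First I would show $\tau(G[V_1])\ge k+1$ and $\tau(G[V_2])\ge k+1$. We already know $|V_1|,|V_2|\ge\delta+1$ from Lemma \ref{U} (since $e(V_1,V_2)=k\le\delta-1$), and $\delta+1\ge 2k+3$ because $\delta\ge 2k+2$. Each $G[V_i]$ is a spanning supergraph of a copy of $K_{|V_i|}$ with exactly $k$ edges removed (delete the $t_i$ missing edges together with $k-t_i$ further edges), and such a graph has at least $k+1$ edge-disjoint spanning trees by Lemma \ref{p=2}; since adding edges cannot decrease the spanning-tree packing number, $\tau(G[V_i])\ge k+1$ for $i=1,2$.

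Next I would assemble the required structure. List $E(V_1,V_2)=\{e_1,\dots,e_k\}$, and take edge-disjoint spanning trees $T_1^{(1)},\dots,T_1^{(k+1)}$ of $G[V_1]$ and $T_2^{(1)},\dots,T_2^{(k+1)}$ of $G[V_2]$. For $1\le i\le k$, the graph $\hat T_i:=T_1^{(i)}\cup T_2^{(i)}\cup\{e_i\}$ is a connected spanning subgraph of $G$ on $n$ vertices with $(|V_1|-1)+(|V_2|-1)+1=n-1$ edges, hence a spanning tree, and $\hat T_1,\dots,\hat T_k$ are pairwise edge-disjoint. Put $F:=T_1^{(k+1)}\cup T_2^{(k+1)}$; it is a forest with $|E(F)|=n-2$, it has exactly two components (no $T_j^{(\cdot)}$ contains a cross edge), and it is edge-disjoint from every $\hat T_i$. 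Then I would verify property $P(k,\delta)$: condition (a) holds via $\hat T_1,\dots,\hat T_k$; condition (b) holds because $n-2>\frac{\delta-1}{\delta}(n-1)$ is equivalent to $n>\delta+1$, which is true since $n\ge 2\delta+3$; and condition (c) holds because $F$ is not a spanning tree and its component spanning $V_1$ is a tree with $|V_1|-1\ge\delta$ edges. This contradicts the assumption that $G$ lacks property $P(k,\delta)$, and the claim follows.

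I do not anticipate a real obstacle here; the only points needing care are invoking Lemma \ref{p=2} through the monotonicity of $\tau$ when strictly fewer than $k$ edges are missing from some $G[V_i]$, and recording the edge-disjointness of $F$ from the $\hat T_i$ (which is immediate: distinct edge-disjoint spanning trees of the same $G[V_i]$ share no edge, trees inside $V_1$ and inside $V_2$ share no edge, and the cross edges $e_1,\dots,e_k$ lie in none of the $T_j^{(\cdot)}$).
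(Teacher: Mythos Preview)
Your argument is correct and is essentially the same as the paper's: both reduce to the situation where each $G[V_i]$ is $K_{|V_i|}$ minus at most $k$ edges, invoke Lemma~\ref{p=2} (you do so via monotonicity of $\tau$, which is in fact a point the paper glosses over), and then build $k$ spanning trees from matched pairs of trees plus one cross edge each, with the leftover pair of trees serving as the forest $F$ that witnesses property $P(k,\delta)$. The only cosmetic difference is that the paper frames it as a two-case split (some $t_i>k$ versus both $t_i\le k$) rather than a single contradiction from $t_1+t_2\le k$.
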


\begin{proof} If $e(G[V_i])< {|V_i|\choose 2}-k$ for $i=1$ or $2$, then
$e(G[V_1])+e(G[V_2])< %\binom{|V_i|}{2}-k+\binom{|V_{3-i}|}{2}=
{|V_1|\choose 2}+{|V_2|\choose 2}-k$,
as desired.

Suppose that  $e(G[V_i])\ge {|V_i|\choose 2}-k$ for each $i=1,2$.
For $i=1,2$, as $\delta\ge 2k+2$ and $|V_i|\ge \delta+1$, we have $|V_i|\ge 2k+3$.
By Lemma \ref{p=2}, $\tau (G[V_i])\ge k+1$ for $i=1,2$. A spanning tree of $G[V_1]$,
a spanning tree of $G[V_2]$, and one edge from $V_1$ to $V_2$ form a spanning tree of $G$.
Thus, $\tau(G)\ge k$.  Apart from the $k$ edge-disjoint spanning trees of $G$,
there is a forest $F$ of $G$ consisting a spanning tree of $G[V_1]$ and
a spanning tree of $G[V_2]$. As $n\ge 2\delta+3$, we have
\[
|E(F)|=|V_1|+|V_2|-2=n-2>\frac{\delta-1}{\delta}(n-1).
\]
As $e(V_1,V_2)=k$, $F$ is not a spanning tree. As $|V_i|\ge \delta+1$ for $i=1,2$, each component of $F$ has at least $\min\{|V_1|,|V_2|\}-1\ge \delta$ edges. So $G$ has property $P(k, \delta)$, contradicting the assumption again.
\end{proof}

By Claim \ref{case1a}, the fact that $e(V_1,V_2)=k$,  and Lemma \ref{choose}, we have
\[
e(G)< {|V_1|\choose 2}+{|V_2|\choose 2} \le {\delta+1\choose 2}+{n-\delta-1\choose 2}.
\]
So, by Lemma \ref{upper}, we have
\begin{align*}
\lambda_1(G)&< \frac{\delta-1}{2}+\sqrt{2\left({\delta+1\choose 2}+{n-\delta-1\choose 2}\right)-n\delta+\frac{(\delta+1)^2}{4}}\\
&=\frac{\delta-1}{2}+\sqrt{\left(n-\frac{3}{2}\delta-\frac{3}{2}\right)^2}\\
&=n-\delta-2.
\end{align*}
Evidently,  $K_{\delta+1}\cup K_{n-\delta-1}$ is a subgraph of $B_{n,\delta+1}^{k-1}$.
From \eqref{aaa}, we have by Lemma \ref{subgraph} that
\[
\lambda_1(G)\ge \lambda_1(B_{n,\delta+1}^{k-1})\ge\lambda_1(K_{\delta+1}\cup K_{n-\delta-1})=n-\delta-2,
\]
which is a contradiction.

\noindent
{\bf Case 2.} $p\ge 3$.

\begin{claim}\label{edges}
$e(G)<{\delta+1\choose 2}+{n-\delta-2\choose 2}+2(k+1)$.
\end{claim}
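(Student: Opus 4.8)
The plan is to estimate $e(G)$ by splitting the edges lying inside the parts $V_1,\dots,V_p$ of the partition from \eqref{11} from the edges running between parts. Put $q=\sum_{1\le i<j\le p}e(V_i,V_j)$; since $q$ is an integer with $q<(k+1)(p-1)$ we get $q\le(k+1)(p-1)-1$, and
\[
e(G)=\sum_{i=1}^p e(G[V_i])+q\le\sum_{i=1}^p\binom{|V_i|}{2}+q .
\]
Everything then reduces to bounding $\sum_{i=1}^p\binom{|V_i|}{2}$ well enough.

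The step that carries the real weight — and the one I expect to be the main obstacle — is to show that at least two of the parts have at least $\delta+1$ vertices. Let $S=\{i:|V_i|\le\delta\}$. By the contrapositive of Lemma \ref{U}, $e(V_i,V(G)\setminus V_i)\ge\delta$ for each $i\in S$, so
\[
\delta|S|\le\sum_{i\in S}e(V_i,V(G)\setminus V_i)\le\sum_{i=1}^p e(V_i,V(G)\setminus V_i)=2q\le 2(k+1)(p-1)-2<\delta(p-1),
\]
where the last inequality uses $\delta\ge 2k+2$. Hence $|S|\le p-2$, so after relabelling with $|V_1|\ge\cdots\ge|V_p|$ we have $|V_1|,|V_2|\ge\delta+1$; in particular $n\ge 2(\delta+1)+(p-2)=2\delta+p$, giving the range $3\le p\le n-2\delta$, which is nonempty because $n\ge 2\delta+3$.

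Now I would bound $\sum_{i=1}^p\binom{|V_i|}{2}$. Applying Lemma \ref{t1} to $|V_2|,\dots,|V_p|$ gives $\sum_{i=2}^p\binom{|V_i|}{2}\le\binom{n-|V_1|-p+2}{2}$, and since $n-|V_1|\ge(\delta+1)+(p-2)$ the argument $n-|V_1|-p+2$ is at least $\delta+1$; as also $|V_1|\ge\delta+1$, Lemma \ref{choose} with $a=\delta+1$ yields $\binom{|V_1|}{2}+\binom{n-|V_1|-p+2}{2}\le\binom{\delta+1}{2}+\binom{n-\delta-p+1}{2}$. Hence
\[
e(G)\le\binom{\delta+1}{2}+\binom{n-\delta-p+1}{2}+(k+1)(p-1)-1 .
\]
Writing $g(p)=\binom{n-\delta-p+1}{2}+(k+1)(p-1)$, one computes $g(p+1)-g(p)=p-(n-\delta)+(k+1)\le(n-2\delta-1)-(n-\delta)+(k+1)=k-\delta<0$ throughout $3\le p\le n-2\delta$, so $g$ is decreasing there and $g(p)\le g(3)=\binom{n-\delta-2}{2}+2(k+1)$. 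Substituting gives $e(G)\le\binom{\delta+1}{2}+\binom{n-\delta-2}{2}+2(k+1)-1<\binom{\delta+1}{2}+\binom{n-\delta-2}{2}+2(k+1)$, as claimed.

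The hypothesis $\delta\ge 2k+2$ is used in exactly two places above — to force $|S|\le p-2$ and to force $g$ to be decreasing on the whole admissible range of $p$ — which is presumably why the threshold $2k+2$ is sharp. The refinement that at least two parts have size $\ge\delta+1$ is genuinely necessary: a crude bound like $\sum_i\binom{|V_i|}{2}\le\binom{n-p+1}{2}$ already overshoots at $p=3$, since $\binom{n-2}{2}$ exceeds $\binom{\delta+1}{2}+\binom{n-\delta-2}{2}$ by a quantity of order $\delta n$.
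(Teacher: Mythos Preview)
Your proof is correct and follows essentially the same route as the paper's: establish that at least two parts have size $\ge\delta+1$ (the paper does this via the $r_i$, you via the contrapositive of Lemma~\ref{U} and a counting bound on $|S|$), then apply Lemmas~\ref{t1} and~\ref{choose} to reduce $\sum_i\binom{|V_i|}{2}$ to $\binom{\delta+1}{2}+\binom{n-\delta-p+1}{2}$, and finally show the resulting function of $p$ is maximized at $p=3$. The only cosmetic differences are that the paper carries the strict inequality from \eqref{11} directly and locates the vertex of the quadratic $\psi(p)$, whereas you pass to the integer bound $q\le(k+1)(p-1)-1$ and use first differences of $g(p)$; both yield the same bound.
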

\begin{proof}

Let $r_i=e(V_i,V\setminus V_i )$ for $i=1,\dots, p$, it follows from \eqref{11} that
\begin{equation}\label{12}
\sum_{i=1}^{p}r_i<2(k+1)(p-1).
\end{equation}
If there exists at most one  part $V_j$ with $1\le j\le p$ such that $r_j\le \delta-1$, then
$r_i\ge \delta$ for all $i$ with  $1\le i\le p$ and $i\ne j$, so we have
\[
\sum_{i=1}^{p}r_i
\ge (p-1)\delta\ge 2(k+1)(p-1),
\]
which contradicts \eqref{12}.
Thus,  there exist two  parts, say $V_s, V_t$ such that $r_s, r_t\le \delta-1$. By Lemma \ref{U}, $|V_s|,|V_t|\ge \delta+1$,  so
\[
p\le n-|V_s|-|V_t|+2\le n-2\delta.
\]
By Lemmas \ref{t1} and \ref{choose},
\begin{align*}
\sum_{i=1}^p e(V_i)& \le {|V_t|\choose 2}+\sum_{i=1\atop i\ne t}^p {|V_i|\choose 2}\\
&\le  {|V_t|\choose 2}+{n-|V_t| -(p-1)+1\choose 2}\\
&\le {\delta+1\choose 2}+{n-(p+\delta-1)\choose 2}.
\end{align*}
From \eqref{11}, we have
\begin{align*}
e(G)&=\sum_{i=1}^pe(V_i)+\sum_{1\le i<j\le p}e(V_i,V_j)\\
&< {\delta+1\choose 2}+{n-(p+\delta-1)\choose 2}+(k+1)(p-1)\\
&=\frac{1}{2}p^2+(\delta+k-n+\frac{1}{2})p+\delta^2-n\delta+\frac{1}{2}n^2+\frac{1}{2}n-k-1.
\end{align*}
Let $\psi(p)=\frac{1}{2}p^2+(\delta+k-n+\frac{1}{2})p+\delta^2-n\delta+\frac{1}{2}n^2+\frac{1}{2}n-k-1$.
Since $p\le n-2\delta$ and $\delta\ge 2k+2$, we have
\[
-(\delta+k-n+\frac{1}{2})>n-2\delta\ge p\ge 3,
\]
so $\psi(p)\le \psi(3)$.
Hence Claim \ref{edges} follows.
\end{proof}

By Claim \ref{edges}, $e(G)<{\delta+1\choose 2}+{n-\delta-2)\choose 2}+2(k+1)$.
By Lemma \ref{upper}, and the facts that $n\ge 2\delta+3$ and $\delta\ge 2k+2$, we have
\begin{align*}
\lambda_1(G)&<\frac{\delta-1}{2}+
\sqrt{2\left({\delta+1\choose 2}+{n-\delta-2)\choose 2}+2(k+1)\right)-n\delta+\frac{(\delta+1)^2}{4}}\\
&=\frac{\delta-1}{2}+\sqrt{n^2 - (3\delta+5)n+\frac{9}{4}\delta^2+\frac{13}{2}\delta+4k+\frac{41}{4}}\\
&=\frac{\delta-1}{2}+\sqrt{\left(n-\frac{3}{2}\delta-\frac{3}{2}\right)^2-\left(2n-2\delta-4k-8\right)}\\
&<\frac{\delta-1}{2}+\sqrt{\left(n-\frac{3}{2}\delta-\frac{3}{2}\right)^2}\\
&=n-\delta-2.
\end{align*}
Similarly as above, we arrive at a contradiction.
\end{proof}

\section{Second largest eigenvalue condition: Proof of Theorem \ref{s1}}

If $G$ is a graph with minimum degree $\delta\ge 1$, and $\lambda_2(G)<\delta$, then $G$ is connected. This follows  from  the well known fact that minimum degree is a lower bound of the spectral radius of a graph.

\begin{proof}[Proof of Theorem \ref{s1}]
By Theorem \ref{Fang}, it suffices to show that $\nu_f(G)> k+\frac{\delta-1}{\delta}$.
So for any partition $V_1,\dots,V_p$ of $V(G)$, it suffices to show that
\begin{equation} \label{23}
\sum_{1\le i< j\le p}e(V_i,V_j)> \left(k+\frac{\delta-1}{\delta}\right)(p-1).
\end{equation}
Let $r_i=e(V_i,V\setminus V_i )$ for $i=1,\dots, p$. Then \eqref{23} is equivalent to
\begin{equation}\label{24}
\sum_{i=1}^{p}r_i> 2\left(k+\frac{\delta-1}{\delta}\right)(p-1).
\end{equation}

Assume that $r_1\le \dots \le r_p$.
If $r_1\ge 2\left(k+\frac{\delta-1}{\delta}\right)$, then
\[
\sum_{i=1}^{p}r_i\ge 2\left(k+\frac{\delta-1}{\delta}\right)p>2\left(k+\frac{\delta-1}{\delta}\right)(p-1),
\]
so \eqref{24} follows.

Suppose that $r_1< 2\left(k+\frac{\delta-1}{\delta}\right)$.
Let $h$ be the maximal index among $1,\dots, p$ such that $r_h< 2\left(k+\frac{\delta-1}{\delta}\right)$.
Since $2\left(k+\frac{\delta-1}{\delta}\right)<2(k+1)\le \delta$, we have $r_i\le \delta-1$ for any $i\in\{1,\dots,h\}$. Then we have by Lemma \ref{U} that $|V_i|\ge \delta+1$ for any $i\in \{1,\dots,h\}$.

\begin{claim} \label{ca1}
$\sum_{i=1}^{h}r_i> 2\left(k+\frac{\delta-1}{\delta}\right)(h-1)$.
\end{claim}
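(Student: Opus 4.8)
The plan is to prove Claim~\ref{ca1} by applying eigenvalue interlacing to a quotient matrix carried by the heavy parts $V_1,\dots,V_h$, and then balancing a Perron-type bound on its largest eigenvalue against its trace.

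If $h=1$ the assertion is simply $r_1>0$, which holds because $\lambda_2(G)<\delta$ forces $G$ to be connected while $\emptyset\ne V_1\subsetneq V(G)$, so at least one edge leaves $V_1$. Assume now $h\ge 2$. Take the partition of $V(G)$ into $V_1,\dots,V_h$ together with $W:=V(G)\setminus(V_1\cup\dots\cup V_h)$ (omitting $W$ when it is empty), let $B$ be the quotient matrix of $A(G)$ for this partition, and let $\tilde B$ be its symmetrization, obtained by conjugating $B$ with $\operatorname{diag}(\sqrt{|V_i|})$; then $\tilde B$ has the same eigenvalues as $B$, with $\tilde B_{ij}=e(V_i,V_j)/\sqrt{|V_i||V_j|}$ for $i\ne j$ and $\tilde B_{ii}=2e(G[V_i])/|V_i|$. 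Let $M=(\tilde B_{ij})_{1\le i,j\le h}$. By Lemma~\ref{quo} the eigenvalues of $B$ interlace those of $A(G)$, and since $M$ is a principal submatrix of $\tilde B$, Lemma~\ref{inter} gives $\lambda_2(M)\le\lambda_2(\tilde B)=\lambda_2(B)\le\lambda_2(G)$.

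Two elementary estimates on $M$ (a symmetric matrix with nonnegative entries) are needed. Since each vertex of $V_i$ has degree at least $\delta$ in $G$, one has $M_{ii}=\frac1{|V_i|}\sum_{v\in V_i}d_{G[V_i]}(v)=\frac1{|V_i|}\sum_{v\in V_i}\big(d_G(v)-|N_G(v)\setminus V_i|\big)\ge\delta-\frac{r_i}{|V_i|}\ge\delta-\frac{r_i}{\delta+1}$, using $|V_i|\ge\delta+1$. For the off-diagonal row sums, $\sqrt{|V_i||V_j|}\ge\delta+1$ and $\sum_{j\ne i,\,j\le h}e(V_i,V_j)\le e(V_i,V(G)\setminus V_i)=r_i$ yield $\sum_{j\ne i,\,1\le j\le h}M_{ij}\le\frac{r_i}{\delta+1}$.

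The step that needs care --- and this is the \emph{main obstacle} --- is the upper bound on $\lambda_1(M)$: one cannot in general take $\lambda_1(M)\le\delta$, because a heavy part that is internally dense inside $G$ pushes its diagonal entry above $\delta$. Instead I would use the crude Perron--Frobenius bound $\lambda_1(M)\le\max_{1\le i\le h}\sum_{j=1}^h M_{ij}$ and let the offending diagonal entry cancel against the trace. Concretely, choose $i_0$ attaining that maximum, so $\lambda_1(M)\le M_{i_0i_0}+\frac{r_{i_0}}{\delta+1}$; combine this with $\operatorname{tr}(M)=\sum_{t=1}^h\lambda_t(M)\le\lambda_1(M)+(h-1)\lambda_2(M)$ and with $\operatorname{tr}(M)=M_{i_0i_0}+\sum_{i\ne i_0}M_{ii}\ge M_{i_0i_0}+(h-1)\delta-\frac1{\delta+1}\sum_{i\ne i_0}r_i$; the terms $M_{i_0i_0}$ cancel, leaving
\[
(h-1)\delta-\frac{1}{\delta+1}\sum_{i=1}^{h}r_i\ \le\ (h-1)\lambda_2(M)\ \le\ (h-1)\lambda_2(G)\ <\ (h-1)\Big(\delta-\tfrac{2\left(k+\frac{\delta-1}{\delta}\right)}{\delta+1}\Big).
\]
Since $h-1\ge 1$, rearranging gives $\sum_{i=1}^h r_i>2\big(k+\frac{\delta-1}{\delta}\big)(h-1)$, which is Claim~\ref{ca1}. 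The remaining points are routine: handling the borderline case $W=\varnothing$ (where $M=\tilde B$ and one skips the submatrix step), and keeping the index $j$ in $\sum_j e(V_i,V_j)\le r_i$ restricted to heavy parts so that the bound on the off-diagonal row sums is valid.
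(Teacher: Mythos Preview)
Your argument is correct, and it takes a genuinely different route from the paper's. The paper proves Claim~\ref{ca1} pairwise: for each $i\in\{2,\dots,h\}$ it passes to the induced subgraph $G[V_1\cup V_i]$, forms the $2\times 2$ quotient matrix with respect to $V_1\cup V_i$, computes its smaller eigenvalue explicitly via the quadratic formula, and from $\lambda_2(B')\le\lambda_2(G)$ extracts the local inequality $e(V_1,V_i)+r_i>2\big(k+\tfrac{\delta-1}{\delta}\big)$; summing over $i$ and using $r_1\ge\sum_{i=2}^h e(V_1,V_i)$ gives the claim. Your approach is global: a single $h\times h$ symmetrized quotient matrix $M$, the row-sum bound $\lambda_1(M)\le M_{i_0i_0}+\tfrac{r_{i_0}}{\delta+1}$, and the trace inequality $\operatorname{tr}(M)\le\lambda_1(M)+(h-1)\lambda_2(M)$; the crucial cancellation of $M_{i_0i_0}$ neatly sidesteps the fact that diagonal entries may exceed $\delta$. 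The paper's method yields the extra pairwise information $e(V_1,V_i)+r_i>2\big(k+\tfrac{\delta-1}{\delta}\big)$, while yours avoids singling out a base part, avoids the explicit $2\times 2$ eigenvalue computation, and is arguably more transparent and more portable to related settings.
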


\begin{proof} As $\lambda_2(G)<\delta$, $G$ is connected, so $r_1>0$. Claim \ref{ca1} follows trivially if $h=1$.

Suppose that $h\ge 2$. Let $i\in \{2,\dots,h\}$.
Let $B=A(G[V_1\cup V_i])$. By Lemma \ref{inter}, we have $\lambda_2(B)\le \lambda_2(G)$. Let $|V_1|=n_1$ and  $|V_i|=n_i$.
For $j=1,i$, let $a_j=\frac{\sum_{v\in V_j}d_G(v)-r_j}{n_j}$.
For $j=1,i$, since $n_j\ge \delta+1$ and $r_j< 2\left(k+\frac{\delta-1}{\delta}\right)$, we have the the assumption that
\[
a_j\ge \delta-\frac{r_j}{n_j}> \delta-\frac{2\left(k+\frac{\delta-1}{\delta}\right)}{\delta+1}>\lambda_2(G).
\]
Then, for $j=1,i$,
\[
a_j-\lambda_2(G)>\frac{2\left(k+\frac{\delta-1}{\delta}\right)}{\delta+1}-\frac{r_j}{n_j},
\]
and so
\begin{equation}\label{21}
a_j-\lambda_2(G)>\frac{2\left(k+\frac{\delta-1}{\delta}\right)}{\delta+1}-\frac{r_j}{\delta+1}.
\end{equation}

The quotient matrix $B'$ of $B$ with respect to the partition $V_1\cup V_i$ is
\[
B'=\begin{pmatrix}
a_1 & \frac{r}{n_1} \\\\
\frac{r_i}{n_i} & a_i
\end{pmatrix},
\]
where $r=e(V_1, V_i)$. By Lemma \ref{quo}, we have
\[
\lambda_2(B')\le \lambda_2(B)\le \lambda_2(G).
\]
By a direct calculation,
$\lambda_2(B')=\frac{1}{2}\left(a_1+a_i-\sqrt{(a_1-a_i)^2+\frac{4r^2}{n_1n_i}}\right)$, from which we have
\[
r^2=\left(a_1-\lambda_2(B')\right)\left(a_2-\lambda_2(B')\right)n_1n_i,
\]
so
\[
r^2\ge \left(a_1-\lambda_2(G)\right)\left(a_2-\lambda_2(G)\right)(\delta+1)^2.
\]
Now, from \eqref{21}, we have
\begin{align*}
r^2
&>\left(\frac{2\left(k+\frac{\delta-1}{\delta}\right)}{\delta+1}-\frac{r_1}{\delta+1}\right)
    \left(\frac{2\left(k+\frac{\delta-1}{\delta}\right)}{\delta+1}-\frac{r_i}{\delta+1}\right)
    \left(\delta+1\right)^2\\
&\ge\left(2\left(k+\frac{\delta-1}{\delta}\right)-r_i\right)^2,
\end{align*}
that is,
\[
e(V_1, V_i)+r_i> 2\left(k+\frac{\delta-1}{\delta}\right).
\]
So
\[
\sum_{i=2}^h e(V_1, V_i)+\sum_{i=2}^h r_i> 2\left(k+\frac{\delta-1}{\delta}\right)(h-1).
\]
Obviously, $r_1\ge \sum_{i=2}^h e(V_1, V_i)$. It follows that
\[
\sum_{i=1}^h r_i> 2\left(k+\frac{\delta-1}{\delta}\right)(h-1). \qedhere
\]
\end{proof}

By Claim \ref{ca1},
\[
\sum_{i=1}^{h}r_i> 2\left(k+\frac{\delta-1}{\delta}\right)(h-1).
\]
Thus
\begin{align*}
\sum_{i=1}^{p}r_i&=\sum_{i=1}^{h}r_i+\sum_{i=h+1}^{p}r_i\\
&> 2\left(k+\frac{\delta-1}{\delta}\right)(h-1)+2\left(k+\frac{\delta-1}{\delta}\right)(p-h)\\
&=2\left(k+\frac{\delta-1}{\delta}\right)(p-1),
\end{align*}
confirming  \eqref{24}.
\end{proof}

%\section{$\alpha$-spectral conditions}

Given a graph $G$ of order $n$ and a real $\alpha\in [0,1)$, let $A_{\alpha}(G)=\alpha D(G)+(1-\alpha) A(G)$, where $D(G)$is the diagonal matrix of vertex degrees of $G$ \cite{GZ,NR}.
We denote by $\lambda_{\alpha, i}(G)$  the $i$-th largest eigenvalue of  $A_\alpha(G)$ of $G$ with $i=1,\dots, n$. Note that $\lambda_{0, i}(G)$ is the $i$-th largest eigenvalue of $G$,
$2\lambda_{\frac{1}{2}, i}(G)$ is the  $i$-th largest signless Laplacian eigenvalue of $G$.
Theorem \ref{s1} can be extended as the following form.

\begin{theorem}\label{s12}
Let $k$ be a positive integer, $\alpha\in [0,1)$, and let $G$ be a graph with minimum degree $\delta\ge 2k+2$.
Suppose that
\[
\lambda_{\alpha, 2}(G)<\delta-2\left(1-\alpha\right)\frac{k+\frac{\delta-1}{\delta}}{\delta+1},
\]
then $G$ has property $P(k, \delta)$.
\end{theorem}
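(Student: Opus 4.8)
The plan is to imitate the proof of Theorem~\ref{s1} almost verbatim, the only genuinely new content being the correct matrix bookkeeping for $A_\alpha$. By Theorem~\ref{Fang} it suffices to prove $\nu_f(G)>k+\frac{\delta-1}{\delta}$, i.e.\ that for every partition $V_1,\dots,V_p$ of $V(G)$, writing $r_i=e(V_i,V\setminus V_i)$, one has $\sum_{i=1}^{p}r_i>2\bigl(k+\frac{\delta-1}{\delta}\bigr)(p-1)$. As in Theorem~\ref{s1} I would first observe that the hypothesis forces $\lambda_{\alpha,2}(G)<\delta$ (the subtracted term $2(1-\alpha)\frac{k+(\delta-1)/\delta}{\delta+1}$ is positive since $\alpha<1$, $k\ge1$); since any component $H$ of $G$ satisfies $\lambda_{\alpha,1}(H)\ge \overline{d}(H)\ge\delta$, a disconnected $G$ would have $\lambda_{\alpha,2}(G)\ge\delta$, so $G$ is connected and every $r_i$ is positive. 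Order $r_1\le\dots\le r_p$; if $r_1\ge 2\bigl(k+\frac{\delta-1}{\delta}\bigr)$ we are done, so let $h$ be the largest index with $r_h<2\bigl(k+\frac{\delta-1}{\delta}\bigr)$. Because $\delta\ge 2k+2$ gives $2\bigl(k+\frac{\delta-1}{\delta}\bigr)<2(k+1)\le\delta$, we get $r_i\le\delta-1$ and hence, by Lemma~\ref{U}, $|V_i|\ge\delta+1$ for all $i\le h$. As in the proof of Theorem~\ref{s1} (cf.\ Claim~\ref{ca1}), everything reduces to showing $\sum_{i=1}^{h}r_i>2\bigl(k+\frac{\delta-1}{\delta}\bigr)(h-1)$, which combined with $r_i\ge 2\bigl(k+\frac{\delta-1}{\delta}\bigr)$ for $i>h$ finishes the theorem; the case $h=1$ is trivial since $r_1>0$.

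To prove this inequality, fix $i\in\{2,\dots,h\}$ and set $n_1=|V_1|$, $n_i=|V_i|$, $r=e(V_1,V_i)$. Let $M$ be the principal submatrix of $A_\alpha(G)$ indexed by $V_1\cup V_i$. Note $M$ is \emph{not} $A_\alpha(G[V_1\cup V_i])$ (its diagonal still records degrees in $G$), but this is precisely what is needed, and since $M$ is real symmetric, Lemma~\ref{inter} gives $\lambda_2(M)\le\lambda_{\alpha,2}(G)$. Form the quotient matrix $B'$ of $M$ for the partition $\{V_1,V_i\}$; the interlacing for quotient matrices (the general symmetric-matrix version behind Lemma~\ref{quo}, i.e.\ Corollary~2.3 in~\cite{Hae}, applied to $M$) gives $\lambda_2(B')\le\lambda_2(M)$. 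Computing block row sums and using $\sum_{v\in V_j}d_G(v)=2e(G[V_j])+r_j$ yields
\[
B'=\begin{pmatrix} a_1 & \frac{(1-\alpha)r}{n_1}\\ \frac{(1-\alpha)r}{n_i} & a_i\end{pmatrix},\qquad\text{where}\qquad a_j=\frac{\sum_{v\in V_j}d_G(v)-(1-\alpha)r_j}{n_j},
\]
which reduces to the quantity used in Theorem~\ref{s1} when $\alpha=0$. From $\sum_{v\in V_j}d_G(v)\ge n_j\delta$, $n_j\ge\delta+1$, $r_j<2\bigl(k+\frac{\delta-1}{\delta}\bigr)$ and the hypothesis one gets $a_j>\lambda_{\alpha,2}(G)$ and, more precisely,
\[
a_j-\lambda_{\alpha,2}(G)>\frac{1-\alpha}{\delta+1}\Bigl(2\bigl(k+\tfrac{\delta-1}{\delta}\bigr)-r_j\Bigr)>0\qquad(j\in\{1,i\}).
\]

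The $2\times2$ matrix $B'$ is diagonally similar to a symmetric matrix with off-diagonal entry $\frac{(1-\alpha)r}{\sqrt{n_1n_i}}$, so it has real eigenvalues with $\lambda_2(B')\le\min\{a_1,a_i\}$, and from its characteristic equation $(1-\alpha)^2r^2=\bigl(a_1-\lambda_2(B')\bigr)\bigl(a_i-\lambda_2(B')\bigr)n_1n_i$. Since $\lambda_2(B')\le\lambda_{\alpha,2}(G)<\min\{a_1,a_i\}$ and $x\mapsto(a_1-x)(a_i-x)$ is increasing for $x<\min\{a_1,a_i\}$, replacing $\lambda_2(B')$ by $\lambda_{\alpha,2}(G)$ and $n_1n_i$ by $(\delta+1)^2$ only decreases the right-hand side; feeding in the two displayed lower bounds and dividing by $(1-\alpha)^2>0$ (this is exactly where $\alpha<1$ is used) gives $r^2>\bigl(2(k+\tfrac{\delta-1}{\delta})-r_1\bigr)\bigl(2(k+\tfrac{\delta-1}{\delta})-r_i\bigr)\ge\bigl(2(k+\tfrac{\delta-1}{\delta})-r_i\bigr)^2$, using $r_1\le r_i$ and positivity of both factors. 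Hence $e(V_1,V_i)+r_i>2\bigl(k+\frac{\delta-1}{\delta}\bigr)$; summing over $i=2,\dots,h$ and using $r_1\ge\sum_{i=2}^{h}e(V_1,V_i)$ yields $\sum_{i=1}^{h}r_i>2\bigl(k+\frac{\delta-1}{\delta}\bigr)(h-1)$, and the theorem follows as in Theorem~\ref{s1}.

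The ``hard part'' is really just careful bookkeeping: identifying the diagonal entries $a_j$ of the $A_\alpha$-quotient matrix correctly and tracking the factors of $1-\alpha$ so that the final division is clean; no new idea beyond the proof of Theorem~\ref{s1} is needed. The two checkpoints that must not be skipped are (i) that the interlacing lemmas are being applied to the symmetric matrix $M=A_\alpha(G)[V_1\cup V_i]$ (not to $A_\alpha$ of the induced subgraph), which is legitimate because those lemmas require only real symmetry; and (ii) that $2\bigl(k+\frac{\delta-1}{\delta}\bigr)<2(k+1)\le\delta$ and that the quantity subtracted in the hypothesis is positive, both of which follow from $\delta\ge 2k+2$, $k\ge1$, and $\alpha<1$ — the same place the bound $\delta\ge2k+2$ is genuinely needed as in Theorem~\ref{s1}.
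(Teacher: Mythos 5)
Your proof is correct, but it takes a genuinely different route from the paper's. The paper proves Theorem \ref{s12} in a few lines by contraposition: if $G$ fails $P(k,\delta)$, then Theorem \ref{s1} forces $\lambda_2(G)\ge \delta-\frac{2\left(k+\frac{\delta-1}{\delta}\right)}{\delta+1}$, and Weyl's inequality applied to $A_\alpha(G)=\alpha D(G)+(1-\alpha)A(G)$ gives $\lambda_{\alpha,2}(G)\ge \alpha\delta+(1-\alpha)\lambda_2(G)\ge \delta-2(1-\alpha)\frac{k+\frac{\delta-1}{\delta}}{\delta+1}$, contradicting the hypothesis. You instead rerun the entire argument of Theorem \ref{s1} directly on $A_\alpha$: Theorem \ref{Fang}, Lemma \ref{U}, Cauchy interlacing on the principal submatrix $M$ of $A_\alpha(G)$ indexed by $V_1\cup V_i$, and quotient interlacing in its general symmetric-matrix form, with the correctly computed quotient diagonal entries $a_j=\frac{\sum_{v\in V_j}d_G(v)-(1-\alpha)r_j}{n_j}$ and off-diagonal entries carrying the factor $1-\alpha$ that cancels at the end. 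Your bookkeeping checks out, including the connectivity argument via $\lambda_{\alpha,1}(H)\ge \overline{d}(H)\ge\delta$ for each component and the observation that $M$ is not $A_\alpha$ of the induced subgraph (which only strengthens the diagonal, as needed). What the two approaches buy: the paper's reduction is much shorter and reuses Theorem \ref{s1} as a black box, while yours is self-contained and shows the interlacing technique transfers verbatim to $A_\alpha$, which is worth knowing but costs a page of essentially repeated computation. One small slip: the map $x\mapsto(a_1-x)(a_i-x)$ is \emph{decreasing}, not increasing, on $x<\min\{a_1,a_i\}$; the inequality you actually use (replacing $\lambda_2(B')$ by the larger $\lambda_{\alpha,2}(G)$ does not increase the product) is exactly what decreasingness gives, so the conclusion stands, but the wording should be fixed.
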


\begin{proof}
Suppose that $G$ does not have  property $P(k, \delta)$. By Theorem \ref{s1}, we have $\lambda_2(G)\ge \delta-\frac{2\left(k+\frac{\delta-1}{\delta}\right)}{\delta+1}$.
By Weyl's inequalities (see \cite{So}), $\lambda_{\alpha, 2}(G)\ge \alpha\delta+(1-\alpha)\lambda_{2}(G)$, so
\begin{align*}
\lambda_{\alpha, 2}(G)
&\ge \alpha\delta+(1-\alpha)\left(\delta-\frac{2\left(k+\frac{\delta-1}{\delta}\right)}{\delta+1}\right)\\
& =\delta-2\left(1-\alpha\right)\frac{k+\frac{\delta-1}{\delta}}{\delta+1},
\end{align*}
which is a contradiction.
\end{proof}

\bigskip
\bigskip

\noindent {\bf Acknowledgement.}
%The authors  would like to thank the reviewers for constructive comments and suggestions.
This work was supported by the National Natural Science Foundation of China (No.~12071158).

\end{document}